\documentclass{birkau}
%\begin{filecontents*}{example.eps}
%!PS-Adobe-3.0 EPSF-3.0
%%BoundingBox: 19 19 221 221
%%CreationDate: Mon Sep 29 1997
%%Creator: programmed by hand (JK)
%%EndComments
%gsave
%newpath
 % 20 20 moveto
 % 20 220 lineto
%  220 220 lineto
%  220 20 lineto
%closepath
%2 setlinewidth
%gsave
 % .4 setgray fill
%grestore
%stroke
%grestore
%\end{filecontents*}
%
%\RequirePackage{fix-cm}
%
%\documentclass{svjour3}                     % onecolumn (standard format)
%\documentclass[smallcondensed]{svjour3}     % onecolumn (ditto),   smallextended,
%\documentclass[smallcondensed,numbook,envcountsame]{svjour3}       % onecolumn (second format)
%\documentclass[twocolumn]{svjour3}          % twocolumn
%
%\smartqed  % flush right qed marks, e.g. at end of proof
%
%\usepackage{graphicx}
%
\usepackage[T1]{fontenc}
\usepackage{amssymb,amsmath}%,amsthm}
\usepackage{enumitem,url}
\usepackage{eucal}

\newtheorem{theorem}{Theorem}[section]
\newtheorem{proposition}[theorem]{Proposition}

\newtheorem{corollary}[theorem]{Corollary}
\newtheorem{fact}[theorem]{Fact}

\theoremstyle{definition}
\newtheorem{definition}[theorem]{Definition}
\newtheorem{exam}[theorem]{Example}
\newtheorem{rem}[theorem]{Remark}
%\newtheorem{nota}[thm]{Notation}
%\numberwithin{equation}{section}

\newcommand{\mb}{\mathbb}
\newcommand{\mc}{\mathcal}
\newcommand{\newspec}{\mathbf{SpecBD}}

\newcommand{\newlatz}{\mathbf{LatBD}}
\newcommand{\altspec}{\mathbf{slSpecD}^s}
\newcommand{\altlatz}{\mathbf{ZLatD}}

\newcommand{\s}{\subseteq}
\newcommand{\PI}{\mathcal{PI}}
\newcommand{\PF}{\mathcal{PF}}
\DeclareMathOperator{\ext}{\mathrm{ext}}
\DeclareMathOperator{\cl}{cl}

\begin{document}
\title{Stone Duality for  Kolmogorov  Locally Small Spaces}
\author{Artur Pi\k{e}kosz} 
\address{Department of Applied Mathematics,\\ 
Cracow University of Technology,\\
ul.  Warszawska 24, 31-155 Krak\'ow,  Poland}
\email{pupiekos@cyfronet.pl}

\begin{abstract} We prove three new versions of Stone Duality.
The main version is the following: the category of Kolmogorov  locally small spaces and  bounded continuous mappings is equivalent to 
the category of  spectral spaces with decent lumps and with bornologies in the lattices of compact (not necessarily Hausdorff) open sets as objects and  
spectral mappings respecting those decent lumps and satisfying a boundedness condition as morphisms as well as it is dually equivalent to the category of 
bounded distributive lattices with bornologies and with decent lumps of prime filters  as  objects and homomorphisms of bounded lattices respecting those decent lumps and satisfying a domination condition as morphisms. 
Some theory of strongly locally spectral spaces is  developed.
\keywords{Stone duality \and Spectral space \and Distributive lattice \and Locally small space \and Equivalence of categories \and Spectralification}

\subjclass{Primary:  18B30 \and 06D50 \and  18F10; 
Secondary:  54D80 \and 54D35 \and  54A05}
\end{abstract}

%\date{Received: date / Accepted: date}
%%\date{\today}
\maketitle

\section{Introduction}
Stone Duality is one of the most important dualities in mathematics.
It is very widely known for Boolean algebras and a little less known for bounded distributive lattices.
In fact,  M. H. Stone's two fundamental papers \cite{S-R,S-A} described duality between generalized Boolean algebras (or Boolean rings) and Hausdorff locally compact   Boolean spaces, where usual Boolean algebras  (or unital Boolean rings) correspond to Hausdorff compact  Boolean spaces. He achieved a beautiful theory of ideals in Boolean rings and
a beautiful theory of representations of Boolean rings in powersets.
The case of distributive lattices was considered by M. H. Stone  in \cite{S}.
Many versions of this duality  exist (see, for example,  \cite{Er} or \cite{Ha} for further literature), including versions of Priestley Duality proved by H. Priestley in \cite{Pr} with many consequences developed in \cite{Pr2}.
Stone Duality for bounded distributive lattices, while considered already in a much broader context in \cite{G}, has been presented in detail in a recent  monograph  \cite{DST} by M. Dickmann,  N. Schwartz and M. Tressl. 

In this paper,  three new versions of  Stone Duality  are proved: for small spaces, for locally small spaces with usual morphisms (bounded continuous mappings) and for 
locally small spaces with bounded strongly continuous mappings as morphisms.
In each of the cases, the Kolmogorov separation axiom ($T_0$) is assumed.

Locally small spaces may be understood to be a special kind of generalized topological spaces in the sense of Delfs and Knebusch (\cite{P}), which in turn are a special form of Grothendieck topologies (see \cite{DK,P1}) or $G$-topologies of \cite{BGR}.
 Locally small spaces were used in o-minimal homotopy theory (\cite{DK,P3}).
 A simpler language for locally small spaces
was introduced and used in \cite{P2} and \cite{P}, compare also \cite{PW}. We continue developing the theory of locally small spaces in this simple language, analogical to  the language of  Lugojan's generalized topology (\cite{Lu}) or 
Cs\'{a}sz\'{a}r's generalized topology (\cite{C}),
where a family of subsets of the underlying set is 
satisfying some, but not all, conditions for a topology. 

The main result of the paper reads as follows: the category of Kolmogorov  locally small spaces and  bounded continuous mappings is equivalent to  
the category of  spectral spaces with distinguished decent lumps and with 
 bornologies in the lattices of compact (not necessarily Hausdorff) open sets as objects
and  spectral mappings respecting those decent lumps and satisfying a boundedness condition as morphisms
and is dually equivalent to 
the category of  bounded distributive lattices with bornologies and with 
decent lumps of prime filters  as  objects
and homomorphisms of bounded lattices satisfying a domination condition and
respecting those decent lumps as morphisms.

Small spaces are a special case of locally small spaces, with some compactness flavour.
While we meet small spaces as these underlying definable spaces over structures with topologies, we meet locally small spaces as those underlying analogical locally definable spaces (\cite{P2,P}).
We show that a Kolmogorov  small space is essentially a patch dense subset of a spectral space.
More precisely: the category of Kolmogorov small spaces and continuous mappings is equivalent to
the category of spectral spaces with distinguished patch dense subsets and 
spectral mappings respecting those patch dense subsets
and is dually equivalent to
the category of bounded distributive lattices with distinguished patch dense sets of prime filters and 
homomorphisms of bounded lattices respecting those patch dense sets.
This means that spectralifications of a Kolmogorov  topological space may be constructed by choosing bounded sublattice bases of the topology.

We have another version of Stone Duality: for Kolmogorov locally small spaces with bounded strongly continuous mappings. This category is equivalent to
the category of  strongly locally spectral spaces with distinguished patch dense subsets as objects and  
strongly spectral mappings respecting those patch dense subsets as morphisms 
and is dually equivalent to  the category of
distributive lattices with zeros and distinguished patch dense sets of prime filters as objects and 
lattice homomorphisms respecting zeros and those patch dense sets and satisfying a condition of domination as morphisms. 
Moreover, some theory of strongly locally spectral spaces is  developed.

The paper is organized in the following way: Section 2 introduces  categories $\mathbf{SS}_0$ and  $\mathbf{LSS}_0$,
Section 3 deals with $\mathbf{SpecD}$ and $\newspec$, Section 4 introduces  $\mathbf{LatD}$ and  $\newlatz$. Section 5 gives the main theorem for $\mathbf{LSS}_0$ and a version for  $\mathbf{SS}_0$. Section 6 deals with spectralifications of Kolmogorov spaces. Section 7 introduces the categories $\mathbf{slSpec}$ and $\mathbf{slSpec}^s$. Section 8 deals with $\mathbf{ZLat}$ and establishes a dual equivalency between $\mathbf{slSpec}^s$ and $\mathbf{ZLat}$. Section 9 concludes with Stone Duality for $\mathbf{LSS}^s_0$. Examples throughout the paper illustrate the topic.

Regarding  the set-theoretic axiomatics for this paper, we follow Saunders Mac Lane's version of Zermelo-Fraenkel axioms  with the axiom of choice plus the existence of a set which is a universe (\cite[p. 23]{MacLane}). 

We shall freely use the notation for family intersection and family difference, 
compatible with \cite{P1,P2,P,PW}:
$$ \mc{U}  \cap_1 \mc{V}= \{ U \cap V\mid U \in \mc{U}, V\in \mc{V}    \}, \quad
 \mc{U}  \setminus_1 \mc{V}= \{ U \setminus V \mid U \in \mc{U}, V\in \mc{V}\} . $$

\section{The Categories  $\mathbf{SS}_0$ and  $\mathbf{LSS}_0$}

\begin{definition}[{{\cite[Definition 2.1]{P}}}]\label{lss}
A \textit{locally small space} 
  is a pair $(X,\mc{L}_X)$, where $X$ is any set and  $\mc{L}_X\subseteq  \mc{P}(X)$ satisfies the following conditions:
\begin{enumerate}[align=left]
\item[(LS1)] \quad $\emptyset \in \mc{L}_X$,
\item[(LS2)] \quad if $A,B \in \mc{L}_X$, then $A\cap B, A\cup B \in \mc{L}_X$,
\item[(LS3)] \quad $\forall x \in X \: \exists A_x \in \mc{L}_X  \:  x\in A_x$ (i.e., $\bigcup \mc{L}_X = X$).
\end{enumerate}
Elements of $\mc{L}_X$ are called \textit{small open} subsets  (or  \textit{smops})  of $X$.
\end{definition}

\begin{definition}[{{\cite[Definition 2.21]{P}}}]
A \textit{small space} is such a locally small space $(X,\mc{L}_X)$ that $X\in \mc{L}_X$.
\end{definition} 

\begin{definition}
A locally small space  $(X,\mc{L}_X)$ will be called $T_0$ (or \textit{Kolmogorov}) if the family $\mc{L}_X$ separates points 
 (\cite[Remainder 1.1.4]{DST}), which  means that for $x,y \in X$ the following condition is satisfied: 
$$\mbox{ if }x\in A  \iff y\in A\mbox{ for each }A \in \mc{L}_X,\mbox{ then }x=y.$$ 
\end{definition}

\begin{definition}[{{\cite[Definition 2.9]{P}}}]
If $(X,\mc{L}_X)$ is a locally small space, then the topology
$ \mc{L}_X^{wo}=\tau(\mc{L}_X), \mbox{ generated by $\mc{L}_X$ in $\mc{P}(X)$,}$
is called the family of \textit{weakly open sets} in $(X,\mc{L}_X)$. 
\end{definition}

\begin{fact}
For a small space $(X,\mc{L}_X)$, the following conditions are equivalent:
\begin{enumerate}
\item[$(1)$] $(X,\mc{L}_X)$ is $T_0$,
\item[$(2)$] the topological space $(X,\mc{L}_X^{wo})$ is $T_0$.
\end{enumerate}
\end{fact}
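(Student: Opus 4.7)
The plan is to reduce the equivalence to the observation that the smop family $\mc{L}_X$ is a base for the weakly open topology $\mc{L}_X^{wo}$. First I would verify this basis property: condition (LS2) gives closure under finite intersections, and (LS3) gives that $\mc{L}_X$ covers $X$, so $\mc{L}_X$ meets the standard criterion for being a base of the topology it generates. Consequently every weakly open set is a union of smops.

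Given this, the direction $(1) \Rightarrow (2)$ is immediate from $\mc{L}_X \subseteq \mc{L}_X^{wo}$: any family that separates points continues to do so after enlargement. For $(2) \Rightarrow (1)$, I would take $x \neq y$ in $X$ and invoke the $T_0$ property of $(X, \mc{L}_X^{wo})$ to obtain a weakly open $U$ containing, say, $x$ but not $y$. Writing $U = \bigcup_i A_i$ with $A_i \in \mc{L}_X$, any index $i$ with $x \in A_i$ yields a smop separating $x$ from $y$, since $A_i \subseteq U$ forces $y \notin A_i$.

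The proof is essentially routine and I anticipate no real obstacle. One mild observation is that the smallness hypothesis $X \in \mc{L}_X$ is not actually used anywhere in the argument; the same reasoning works verbatim for any locally small space, so the stated equivalence holds at that broader level of generality.
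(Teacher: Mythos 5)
Your proof is correct and is precisely the routine verification that the paper omits (the statement appears as a Fact without proof): by (LS2) and (LS3) the family $\mc{L}_X$ is a base of $\mc{L}_X^{wo}$, so a weakly open set separating $x$ from $y$ can always be replaced by a smop, and the converse inclusion $\mc{L}_X \subseteq \mc{L}_X^{wo}$ handles the other direction. Your side remark is also accurate: the hypothesis $X \in \mc{L}_X$ plays no role, so the equivalence holds for arbitrary locally small spaces.
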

%%\begin{proof} Easy. \end{proof}

\begin{exam}   \label{spaces}
(1) The small spaces 
$\mb{R}_{om}=(\mb{R},\mc{L}_{om})$, 
$\mb{R}_{rom}=(\mb{R},\mc{L}_{rom})$, 
$\mb{R}_{slom}=(\mb{R},\mc{L}_{slom})$, 
$\mb{R}_{st}=(\mb{R},\tau_{nat})$ from \cite[Example 2.14]{P}, (compare 
\cite[Definition 1.2]{PW})
have the natural topology $\tau_{nat}$ on $\mb{R}$ as the topology of weakly open sets, so they are Kolmogorov  small spaces. 
In the above, we have: 
\begin{enumerate}
\item[i)]  $\mc{L}_{om}=$ the family of all  finite unions of open intervals,  
\item[ii)] $\mc{L}_{rom}=$ the family of all finite unions of open intervals with rational numbers or infinities as endpoints, 
\item[iii)] $\mc{L}_{slom}=$ the family of all locally finite (in the traditional sense) unions of bounded open intervals.  
\end{enumerate}

(2) The space $(\mb{R}, \mc{L}_{iom})$, where $\mc{L}_{iom}$ is the family of all finite unions of open intervals with integers or infinities as ends, is not  Kolmogorov.

%%(3) The mappings $id_{\mathbb{R}}:\mathbb{R}_{st}\to \mathbb{R}_{slom}$,
%%$id_{\mathbb{R}}:\mathbb{R}_{slom}\to \mathbb{R}_{om}$,
%%$id_{\mathbb{R}}:\mathbb{R}_{om}\to \mathbb{R}_{rom}$ are morphisms of $\mathbf{SS}_0$, but their inverses are not.
\end{exam}

\begin{definition}[\cite{P}]
For a locally small space $(X,\mc{L}_X)$, we define the family of \textit{open} sets as
$$  \mc{L}_X^o =\{ M\subseteq  X \mid M\cap_1 \mc{L}_X \subseteq \mc{L}_X \}.$$
\end{definition}
\begin{rem}
The family $\mc{L}_X^o$ is a bounded sublattice of $\mc{P}(X)$ 
containing $\mc{L}_X$. The open sets are those subsets of $X$ that are "compatible with" smops. 
\end{rem}

\begin{exam} \label{lom}
Consider the following families of subsets of the set $\mb{R}$ of real numbers:
\begin{enumerate}
\item[i)]  $\mc{L}_{lom}=$ the family of all  finite unions of bounded open intervals, 
\item[ii)] $\mc{L}^o_{lom}=\mc{L}_{slom}=$ the family of all locally finite unions of bounded open intervals.
\item[iii)]  $\mc{L}_{lrom}=$ the family of all  finite unions of bounded open intervals with rational endpoints, 
\item[iv)] $\mc{L}^o_{lrom}=$ the family of all locally finite unions of  open intervals with rational endpoints.
\end{enumerate}
Then $\mb{R}_{lom}=(\mb{R},\mc{L}_{lom})$ and  
$\mb{R}_{lrom}=(\mb{R},\mc{L}_{lrom})$
are Kolmogorov locally small spaces (compare \cite[Example 2.14]{P}  
and \cite[Definition 1.2]{PW}) that are not small.
\end{exam}

\begin{definition}
Assume $(X, \mc{L}_X)$ and $(Y,\mc{L}_Y)$  are locally small spaces.
Then a mapping $f:X \to Y$ is: 
\begin{enumerate}
\item[(a)]  \textit{bounded} (\cite[Definition 2.40]{P}) if $\mc{L}_X$  refines $f^{-1}(\mc{L}_Y)$, 
which means that  each $A\in  \mc{L}_X$ admits $B\in \mc{L}_Y$ such that $A \s f^{-1}(B)$,
\item[(b)]  \textit{continuous} (\cite[Definition 2.40]{P})  if 
$f^{-1}(\mc{L}_Y) \cap_1 \mc{L}_X \subseteq \mc{L}_X $ (i.e., $f^{-1}(\mc{L}_Y) \subseteq \mc{L}^o_X $),
\item[(c)] \textit{strongly continuous} if $f^{-1}(\mc{L}_Y)  \subseteq \mc{L}_X$. 
\end{enumerate} 
\end{definition}
\begin{definition} We consider the following categories:
\begin{enumerate}
\item[(a)]
the  category \textbf{LSS} of  locally small spaces and their bounded continuous mappings (\cite[Remark 2.46]{P}), 
\item[(b)]
the full subcategory  $\mathbf{LSS}_0$  in  $\mathbf{LSS}$   of $T_0$ locally small spaces, 
\item[(c)] 
the full subcategory \textbf{SS} in $\mathbf{LSS}$ of  small spaces (\cite[Remark 2.48]{P}),
\item[(d)]
the full subcategory   $\mathbf{SS}_0$  in $\mathbf{LSS}$ of $T_0$ small spaces.
\end{enumerate}
\end{definition}

\section{The Categories $\mathbf{SpecD}$ and $\newspec$}

\begin{definition}
For any topological space $X=(X,\tau_X)$, we consider the following families of subsets:
\begin{enumerate}
\item[(a)]
 the family  $CO(X)$ of compact (not necessarily Hausdorff) open subsets of~$X$,
\item[(b)]
 the family $ICO(X)$  of intersection compact open subsets of $X$. (An open subset $Y$  of $X$ is \emph{intersection compact open} if for every compact open set $V$ their intersection $V\cap Y$ is compact, see \cite{E}.)
\end{enumerate}
\end{definition}

\begin{definition}
A \textit{spectral space}  is a topological  space $X=(X,\tau_X)$
satisfying the following conditions  (compare   \cite[Definition 1.1.5]{DST}):
\begin{enumerate}[align=left]
\item[(S1)] $X\in CO(X)$,
\item[(S2)] $CO(X)$ is a basis of $\tau_X$,
\item[(S3)] $CO(X) \cap_1 CO(X) \subseteq CO(X)$,
\item[(S4)] $(X,\tau_X)$ is  $T_0$,
\item[(S5)] $(X,\tau_X)$ is sober (this means: $\forall \: W\in \tau_X\setminus \{X\} \:\exists \: V_1 ,V_2 \in \tau_X \: ( W=V_1 \cap V_2 \wedge ( W\neq V_1 \wedge W\neq V_2)) \vee \: \exists x\in X  \: (W=\ext\{x\})$).
\end{enumerate}
Here $\ext V$ denotes the exterior $X\setminus \cl V$   of a set $V\s X$.
\end{definition}

Hochster (\cite{H}) proved that every spectral space is homeomorphic to the
Zariski spectrum of some commutative unital ring. 

\begin{definition}
A mapping $g:X\to Y$ between spectral spaces is \textit{spectral} if the preimage of any compact open subset of $Y$ is a compact open subset of $X$, shortly: $g^{-1}(CO(Y))\subseteq CO(X)$, see   \cite[Definition 1.2.2]{DST}.
We have the category \textbf{Spec} of spectral spaces and spectral mappings.
\end{definition}

\begin{rem}[The classical Stone Duality]\label{classical}
The category $\mathbf{Lat}$ of bounded distributive lattices with homomorphisms of bounded lattices is dually equivalent to the category 
\textbf{Spec}. While \cite[Chapter 3]{DST} uses contravariant functors and homomorphisms into a two-element lattice, we restate Stone Duality using covariant functors and prime filters.
Namely, we have:
\begin{enumerate}
\item The functor $\mathit{Sp}:\mathbf{Lat}^{op} \to \mathbf{Spec}$  is given by:
\begin{enumerate}
\item[a)] $Sp(L)= (\mc{PF}(L),\tau(\widetilde{L}))$ for  $L=(L, \vee, \wedge, 0,1)$
a bounded distributive lattice, 
where  $\mc{PF}(L)$  is the set of all prime filters in $L$ with topology 
$\tau(\widetilde{L})$ on  $\mc{PF}(L)$  generated by the family $\widetilde{L}$, where
$ \widetilde{L} =\{ \tilde{a} \mid a\in L\} \subseteq \mc{P}(\mc{PF}(L))$  and  
 $\tilde{a}=\{ {F}\in \mc{PF}(L)\mid a\in {F} \}$,
\item[b)] $Sp(h^{op})=h^{\bullet}$ for a homomorphism of bounded lattices 
$h:L\to M$ where, for ${G}\in \mc{PF}(M)$, we have
$$h^{\bullet}({G})= \{ a\in L\mid h(a)\in {G} \} \in \mc{PF}(L).$$
\end{enumerate} 

\item The functor $\mathit{Co}:\mathbf{Spec} \to \mathbf{Lat}^{op}$  is given by:
\begin{enumerate}
\item[a)] $Co(X) =CO(X)$ with obvious lattice operations on $CO(X)$,
\item[b)] $Co(g)=(\mc{L}g)^{op}$, where $\mc{L}g:CO(Y) \to CO(X)$ is defined by\\ 
$(\mc{L}g)(W)=g^{-1}(W)$ for a spectral $g:X\to Y$ and $W\in CO(Y)$. 
\end{enumerate}
\end{enumerate}
Then the compositions  $Sp  Co$, $Co  Sp$  are naturally isomorphic to 
the identity functors $Id_{\mathbf{Spec}}$, $Id_{\mathbf{Lat}^{op}}$, respectively.
Consequences of  the classical Stone Duality (\cite[3.2.5]{DST}) include:
\begin{enumerate}
\item[i)] the fact that each bounded distributive lattice $L=(L, \vee, \wedge, 0,1)$ is isomorphic to the lattice $ (\widetilde{L},\cup, \cap, \emptyset, \mc{PF}(L))$  of subsets of $\mc{PF}(L)$  and 
\item[ii)]  the equality $\widetilde{L} =CO(\mc{PF}(L))$.
\end{enumerate}
\end{rem}

\begin{definition}
An \emph{object of} 
$\mathbf{SpecD}$ is a pair $((X,\tau_X),X_{d})$ where $(X,\tau_X)$ 
is a spectral space  and $X_{d}$ is  a subset of $X$ satisfying:
$$\qquad \qquad   \forall \: U,V \in CO(X) \quad U\neq  V \implies U\cap X_d \neq V\cap X_d. $$
Then $X_d$ is called a \emph{decent subset} of $X$.
 
A \emph{morphism of} $\mathbf{SpecD}$ between 
$((X,\tau_X),X_{d})$ and $((Y,\tau_Y),Y_{d})$  is a spectral mapping $g:X\to Y$  between spectral spaces $(X,\tau_X)$ and $(Y,\tau_Y)$ that respects the decent subset, that is:  $g(X_d) \subseteq Y_d$.
\end{definition}

\begin{fact} If $X_d$ is a decent subset of a spectral space $(X,\tau_X)$, then 
 the lattice  $(CO(X), \cup, \cap, \emptyset,X)$  is isomorphic to  the lattice
$(CO(X)_d, \cup, \cap, \emptyset,X_d)$, where  
$$CO(X)_d= CO(X)\cap_1 X_d = \{ U\cap X_d\mid U\in CO(X)\} . $$ 
\end{fact}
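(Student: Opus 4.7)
The plan is to verify that the set-map
\[
\phi: CO(X) \longrightarrow CO(X)_d, \qquad \phi(U) = U \cap X_d,
\]
is a bijective lattice homomorphism preserving bottom and top. This will give the stated isomorphism between $(CO(X),\cup,\cap,\emptyset,X)$ and $(CO(X)_d,\cup,\cap,\emptyset,X_d)$.

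First I would check that $\phi$ is well defined and surjective: well-definedness is immediate from $U \in CO(X)$, while surjectivity is forced by the very definition $CO(X)_d = CO(X)\cap_1 X_d$. Injectivity is the only place where the hypothesis on $X_d$ is used: if $U,V\in CO(X)$ with $\phi(U)=\phi(V)$, i.e.\ $U\cap X_d = V\cap X_d$, then the defining condition of a decent subset (which prohibits $U\ne V$ in $CO(X)$ when $U\cap X_d = V\cap X_d$) forces $U=V$.

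Next I would verify preservation of the lattice operations, which is a routine calculation using distributivity of $\cap$ over $\cup$ in the powerset $\mc P(X)$. For any $U,V\in CO(X)$,
\[
(U\cup V)\cap X_d = (U\cap X_d)\cup(V\cap X_d), \qquad (U\cap V)\cap X_d = (U\cap X_d)\cap(V\cap X_d),
\]
so $\phi(U\cup V)=\phi(U)\cup \phi(V)$ and $\phi(U\cap V)=\phi(U)\cap \phi(V)$. For the bounds, $\phi(\emptyset)=\emptyset\cap X_d=\emptyset$, and by axiom (S1) we have $X\in CO(X)$, so $X$ is the top of $CO(X)$ and $\phi(X)=X\cap X_d = X_d$, which is the top of $CO(X)_d$.

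I do not expect any real obstacle here: the content of the fact is essentially the observation that the decent condition is tailored to make the obvious surjection $U\mapsto U\cap X_d$ injective, and that the remaining axioms of a lattice homomorphism are transported from $\mc P(X)$ for free. The only point worth being careful about is making sure that $CO(X)_d$ is being viewed as a lattice under the operations inherited from $\mc P(X_d)$, which are the restrictions of those on $\mc P(X)$, so no hidden compatibility issue arises.
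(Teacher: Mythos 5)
Your proof is correct, and it fills in exactly the argument the paper leaves implicit (the statement is given as a \emph{Fact} with no proof): the map $U\mapsto U\cap X_d$ is surjective by the definition of $CO(X)_d$, injective precisely by the decency condition, and preserves $\cup$, $\cap$, $\emptyset$ and the top element by distributivity of intersection in $\mc{P}(X)$ together with (S1). Nothing further is needed.
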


\begin{rem}
If $((X,\tau_X),X_{d})$ is an object of $\mathbf{SpecD}$, then, by 
(\cite[3.2.8]{DST}),  both the spaces
$ \mc{PF}(CO(X))$  and $ \mc{PF}(CO(X)_d)$  with their spectral topologies   are homeomorphic to $(X,\tau_X)$. A point $x\in X$ corresponds to
$$ \hat{x}=\{ V\in CO(X)\mid x\in V\} \mbox{ in } \mc{PF}(CO(X)) \quad \mbox{   and to }$$
$$ \hat{x}^d=\{ U\in CO(X)_d\mid x\in U\} \mbox{ in } \mc{PF}(CO(X)_d), 
\mbox{ respectively}.$$ 
\end{rem}

\begin{definition}[{{\cite[Proposition 1.3.13]{DST}}}]
Let $(X,\tau_X )$ be a spectral space. Then the \textit{patch topology} (or the \textit{constructible topology}) on $X$ is the topology with  the family $CO(X) \setminus_1 CO(X)$ as a basis.
\end{definition}

\begin{proposition} \label{patch-dense}
For a spectral space $(X,\tau_X )$ and $X_d \s X$, the following conditions are equivalent:
\begin{enumerate}
\item[$(1)$] $X_d$ is patch dense,
\item[$(2)$] $X_d$ is decent.
\end{enumerate}
\end{proposition}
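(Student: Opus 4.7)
The plan is to prove the two implications separately, using basic patch-open sets of the form $U\setminus V$ with $U,V\in CO(X)$, and the fact (axiom S3) that $CO(X)$ is closed under finite intersections.

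For $(1)\Rightarrow (2)$, I would argue contrapositively. Assume $X_d$ is not decent: there exist $U,V\in CO(X)$ with $U\neq V$ but $U\cap X_d = V\cap X_d$. Since $U\neq V$, at least one of the symmetric differences $U\setminus V$ or $V\setminus U$ is nonempty; by symmetry assume $U\setminus V\neq \emptyset$. This set is a basic patch-open set, so by patch density there is some $y\in (U\setminus V)\cap X_d$. Then $y\in U\cap X_d = V\cap X_d\subseteq V$, contradicting $y\notin V$.

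For $(2)\Rightarrow (1)$, I would verify density by checking every nonempty basic patch-open set meets $X_d$. Let $W=U\setminus V$ with $U,V\in CO(X)$ and $W\neq\emptyset$; it suffices to show $W\cap X_d\neq\emptyset$. Because $U\not\subseteq V$, we have $U\neq U\cap V$, and by (S3) both $U$ and $U\cap V$ lie in $CO(X)$. Decency then gives $U\cap X_d \neq (U\cap V)\cap X_d$. Since $U\cap V\subseteq U$, the inclusion $(U\cap V)\cap X_d\subseteq U\cap X_d$ is strict, so there is some $x\in U\cap X_d$ with $x\notin U\cap V$, hence $x\notin V$. Thus $x\in (U\setminus V)\cap X_d=W\cap X_d$, as required.

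The only mildly subtle point is noticing that a failure of decency at the pair $(U,V)$ automatically produces a \emph{nonempty} basic patch-open set inside which density can be applied, and dually, that to exploit decency in a set of the form $U\setminus V$ one should compare $U$ with $U\cap V$ rather than with $V$ itself; this second trick is what lets (S3) do its work. Once these choices are made, the argument is a short direct verification with no further obstacle.
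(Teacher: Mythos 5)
Your proof is correct and follows essentially the same route as the paper's: for density you compare $U$ with $U\cap V$ in $CO(X)$ exactly as the paper compares $A$ with $A\cap B$, and for decency you locate a point of $X_d$ in the (nonempty part of the) symmetric difference, which is what the paper does with $A\triangle B$. No gaps.
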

\begin{proof}
If the set $X_d$ is decent in $(X,\tau_X)$ 
and $U$ is a non-empty patch open set in $(X,\tau_X)$, then we may assume $U=A\setminus B$ with $A,B\in CO(X)$. 
Since $U=A \triangle (A\cap B)$ is non-empty, $A$ and $A\cap B$ are different in 
$CO(X)$, so
$A\cap X_d$ and $A\cap B\cap X_d$ are different in $CO(X)_d$.
This means $(A\setminus B) \cap X_d$ is non-empty. Hence $X_d$ is patch dense.

If $X_d$ is patch dense in $(X,\tau_X)$  and $A,B$ are different members of $CO(X)$, then $A\triangle B$ is a non-empty  patch open set. Hence $X_d$ intersects   $A\triangle B$ and $A\cap X_d$ is different from $B\cap X_d$ in $CO(X)_d$. This means  $X_d$ is decent in $(X,\tau_X)$. 
\end{proof}

\begin{exam}  \label{spectral}
The real spectrum of $\mb{R}[X]$, often  denoted by 
$\widetilde{\mb{R}}$ (see 7.1.4 b) and 7.2.6 in \cite{BCR}),
can be up to a homeomorphism described in the following way: 
it  contains points $r^-, r,r^+$ for each real number $r$,  the infinities $-\infty,+\infty$ and admits the obvious linear order.
As a basis of the topology on $\widetilde{\mb{R}}$, we take the family $\mc{B}$ containing: finite intervals $[r^+, s^-]=\widetilde{(r,s)}$ for $r,s\in \mb{R}$, $r<s$ and
infinite intervals $[-\infty, s^-]=\widetilde{(-\infty,s)}$, 
$[r^+,+\infty]=\widetilde{(r,+\infty)}$ for any $r,s \in \mb{R}$.

Then $CO(\widetilde{\mb{R}})$ is the family of finite unions of basic sets and the topological space $(\widetilde{\mb{R}}, \tau(\mc{B}))$ is spectral. The set $\mb{R}$ of real numbers is decent in this spectral space, so $((\widetilde{\mb{R}},\tau(\mc{B})), \mb{R} )$ is an object of $\mathbf{SpecD}$. (The operation $\widetilde{\ \cdot \ }$ mentioned in this example is an isomorphism between the Boolean algebra of semialgebraic sets in $\mb{R}$ and the Boolean algebra  of constructible sets in $\widetilde{\mb{R}}$, see   \cite[Proposition 7.2.3]{BCR}.) 

Any semialgebraic mapping $g:\mb{R} \to \mb{R}$ (i.e., $g$ has a semialgebraic graph) extends  (uniquely)  to a maping $\widetilde{g}:\widetilde{\mb{R}} \to \widetilde{\mb{R}}$
satisfying the condition $\widetilde{g}^{-1}(\widetilde{T})=\widetilde{g^{-1}(T)}$ for any semialgebraic $T\subseteq \mb{R}$,
 as in \cite[Proposition 7.2.8]{BCR}, which means that
 $\widetilde{g}:((\widetilde{\mb{R}}, \tau(\mc{B})), \mb{R} ) \to 
 ((\widetilde{\mb{R}},\tau(\mc{B})), \mb{R} ) $
 is a morphism of $\mathbf{SpecD}$.
\end{exam}

\begin{definition}
A \textit{bornology} in a bounded lattice $(L,\vee,\wedge,0,1)$ is an ideal  $B\subseteq L$ such that
$$\bigvee  B =1   .$$
\end{definition}

\begin{definition} \label{newspec}
An \textit{object of} $\newspec$ is a system $((X,\tau_X), CO_s(X),X_d)$ where
$(X,\tau_X)$ is a spectral space, $CO_s(X)$ is a  bornology in  the bounded  lattice $CO(X)$  and $X_d$ satisfies the following conditions:
\begin{enumerate}
\item[$(1)$]  $X_d\s \bigcup CO_s(X)$, %%X_s =
\item[$(2)$]    $R_d: CO(X) \ni A \mapsto A\cap X_d \in CO(X)_d$
is an  isomorphism of lattices,
\item[$(3)$]  $CO(X)_d = (CO_s(X)_d)^o \subseteq \mc{P}(X_d)$.
\end{enumerate}
Such $X_d$ will be  called a \emph{decent lump} of $X$.

A \textit{morphism}  {from}  $((X,\tau_X), CO_s(X),X_d)$ {to} $((Y,\tau_Y), CO_s(Y),Y_d)$ \textit{in} $\newspec$ is such a spectral mapping 
 between spectral spaces  $g:(X,\tau_X)\to (Y,\tau_Y)$ that: 
\begin{enumerate}
\item[a)] satisfies the \textit{condition of boundedness}
$$ \forall A\in CO_s(X) \: \exists B\in CO_s(Y) \quad g(A)\subseteq B,$$
\item[b)]   {respects} the decent lump:  $g(X_d) \subseteq Y_d$.
\end{enumerate}
\end{definition}
\begin{exam} \label{many}
Each of the  spectral spaces $\PF(\mc{L}^o_{lom})$, $\PF(\mc{L}^o_{lrom})$ decomposes into two parts:  prime filters may or may not intersect $\mc{L}_{lom}$, $\mc{L}_{lrom}$, respectively.
Those elements of  $\PF(\mc{L}^o_{lom})$ that   intersect
 $\mc{L}_{lom}$ correspond  bijectively to  the elements of $\PF(\mc{L}_{lom})$. The latter set  may be topologically identified with an open set  in  $\PF(\mc{L}^o_{lom})$ or an open set 
 $ \bigcup_{r,s\in \mb{R}} [r^+,s^-] = 
\tilde{\mb{R}}\setminus \{ -\infty, +\infty\}$ in $\tilde{\mb{R}}$,
 using the notation of Example \ref{spectral}. 
On the other hand, $\PF(\mc{L}^o_{lom})$ has many other points (some of them may be constructed using  ultrafilters on the set of natural numbers).
Similar facts hold true for   $\PF(\mc{L}^o_{lrom})$.
\end{exam}

\section{The Categories $\mathbf{LatD}$ and  $\newlatz$}

\begin{definition}
\emph{Objects of}  $\mathbf{LatD}$ are pairs $(L, \mathbf{D}_L)$ with 
$L=\!(L,\vee,\wedge,0,1)$  a  bounded distributive lattice  and $\mathbf{D}_L\subseteq \mc{PF}(L)$   satisfying 
 $$\forall a,b \in L \quad a\neq b \implies \tilde{a}^d \neq \tilde{b}^d \:\: (\mbox{where }\tilde{a}^d=\{ {F}\in \mathbf{D}_L\mid a\in {F} \}=\tilde{a}\cap \mathbf{D}_L).$$
Then  $\mathbf{D}_L$ is called a  \emph{decent set of prime filters} on $L$.
  
\emph{Morphisms of} $\mathbf{LatD}$ are such homomorphisms of bounded lattices 
$h:L\to M$ that  
$h^{\bullet} (\mathbf{D}_{M}) \subseteq \mathbf{D}_L$.
\end{definition}

\begin{fact} \label{17}
If $\mathbf{D}_L$ is a decent set of prime filters of $(L,\vee,\wedge,0,1)$,
 then  the bounded lattice 
$(\widetilde{L}^d,\cup, \cap, \emptyset, \mathbf{D}_L ),  \mbox{ where }  \widetilde{L}^d=\{ \tilde{a}^d \mid a\in L\},$ 
is isomorphic to $(L,\vee,\wedge,0,1) $. Moreover, 
$\widetilde{L}^d=CO(\mc{PF}(L))\cap_1 \mathbf{D}_L $.
\end{fact}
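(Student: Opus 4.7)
The plan is to deduce this statement directly from the classical Stone Duality recalled in Remark \ref{classical}, using decency only to promote an already-known surjection to a bijection.

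First I would build the natural candidate map $\varphi:L\to \widetilde{L}^d$ by $a\mapsto \tilde{a}^d=\tilde{a}\cap \mathbf{D}_L$. Surjectivity is immediate from the definition of $\widetilde{L}^d$, and injectivity is precisely the decency condition: $a\neq b$ in $L$ forces $\tilde{a}^d\neq \tilde{b}^d$. Thus $\varphi$ is a bijection of underlying sets.

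Next I would verify that $\varphi$ is a homomorphism of bounded lattices. From the classical duality we already know that $a\mapsto \tilde{a}$ satisfies $\widetilde{a\vee b}=\tilde{a}\cup \tilde{b}$, $\widetilde{a\wedge b}=\tilde{a}\cap \tilde{b}$, $\tilde{0}=\emptyset$ and $\tilde{1}=\mc{PF}(L)$. Since intersecting with the fixed set $\mathbf{D}_L$ distributes over unions and intersections, these identities transfer verbatim to $\widetilde{L}^d$: $\widetilde{a\vee b}^d=\tilde{a}^d\cup \tilde{b}^d$, $\widetilde{a\wedge b}^d=\tilde{a}^d\cap \tilde{b}^d$, $\tilde{0}^d=\emptyset$, $\tilde{1}^d=\mathbf{D}_L$. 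Combined with the bijection above, this makes $\varphi$ a bounded lattice isomorphism between $(L,\vee,\wedge,0,1)$ and $(\widetilde{L}^d,\cup,\cap,\emptyset,\mathbf{D}_L)$.

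For the ``moreover'' part I would invoke consequence (ii) of the classical Stone Duality, namely $\widetilde{L}=CO(\mc{PF}(L))$. Rewriting the defining formula gives
\[
\widetilde{L}^d=\{\tilde{a}\cap \mathbf{D}_L \mid a\in L\}=\widetilde{L}\cap_1 \mathbf{D}_L = CO(\mc{PF}(L))\cap_1 \mathbf{D}_L,
\]
which is the claimed equality. No real obstacle is expected: decency is engineered exactly to guarantee that the restriction map $\widetilde{L}\to \widetilde{L}^d$ remains injective, and everything else is bookkeeping on top of the classical duality.
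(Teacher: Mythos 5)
Your proof is correct and follows the only natural route, which is the one the paper implicitly relies on (the Fact is stated without an explicit proof, resting on consequences i) and ii) of Remark~\ref{classical}): decency gives injectivity of $a\mapsto\tilde a^d$, intersecting with the fixed set $\mathbf{D}_L$ preserves the lattice identities inherited from $a\mapsto\tilde a$, and the ``moreover'' part is the identity $\widetilde{L}=CO(\mc{PF}(L))$ intersected termwise with $\mathbf{D}_L$. No gaps.
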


\begin{definition} \label{newlatz}
An \emph{object of}  $\newlatz$ is a  system $(L, L_s, \mathbf{D}_L)$ with\\ 
$L=(L,\vee,\wedge,0,1)$ a bounded distributive lattice, $L_s$ a bornology in $L$ and\\ 
 $\mathbf{D}_L$ satisfying the conditions: 
\begin{enumerate}
\item[$(1)$]  $\mathbf{D}_L\subseteq \bigcup \widetilde{L_s}\subseteq \mc{PF}(L)$, 
\item[$(2)$]   $ \forall a,b \in L \quad a\neq b \implies \tilde{a}^d \neq \tilde{b}^d,$ where $\tilde{a}^d=\{ {F}\in \mathbf{D}_L\mid  a\in {F} \}$,
\item[$(3)$]  $\widetilde{L} \cap_1 \mathbf{D}_L = (\widetilde{L_s}\cap_1 \mathbf{D}_L )^o \subseteq \mc{P}(\mathbf{D}_L)$.
 \end{enumerate}
Such $\mathbf{D}_L$ will be called a  \emph{decent lump of prime filters} on $L$.
 
A  \emph{morphism of} $\newlatz$ from $(L, L_s, \mathbf{D}_L)$ to 
$(M, M_s, \mathbf{D}_M)$ is  such a homomorphism of bounded lattices  $h:L\to M$ that:
\begin{enumerate}
\item[a)]
satisfies  the \textit{condition of domination}
$$ \forall a\in M_s \: \exists b\in L_s \quad a \vee h(b) = h(b), $$
\item[b)] {respects} {the decent lump of prime filters}: $h^{\bullet} (\mathbf{D}_{M}) \subseteq \mathbf{D}_L$.
\end{enumerate}
\end{definition}

\section{Stone Duality for  $\mathbf{LSS}_0$ and  $\mathbf{SS}_0$}

\begin{proposition} \label{zaloz} Assume $(X, \mc{L}_X)$ is a  locally small space. Then 
$$\mc{L}_X \cong    \widetilde{\mc{L}_X} \cap_1 \hat{X} \mbox{ and } \mc{L}^o_X \cong    \widetilde{\mc{L}^o_X} \cap_1 \hat{X}=\widetilde{(\mc{L}_X})^o \cap_1 \hat{X}= \widetilde{(\mc{L}_X}\cap_1 \hat{X})^o \subseteq \mc{P}(\hat{X}),$$ 
where \quad  $\widetilde{\mc{L}_X} =\{ \widetilde{A}\mid  A \in \mc{L}_X\}$, \qquad
$\widetilde{A}=\{ F\in \PF(\mc{L}^o_X)\mid  A\in F\}$, 
$$\quad  \hat{X}=\{ \hat{x}\mid  x\in X\} \s \mc{PF}(\mc{L}_X^o),\mbox{ and }\quad  \hat{x}=\{ W\in \mc{L}_X^o \mid x\in W\}.$$ 
\end{proposition}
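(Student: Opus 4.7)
The plan is to use the ``principal'' prime filters $\hat{x}$ as the bridge between $X$ and $\mc{PF}(\mc{L}_X^o)$. First I would check that $\hat{x}$ really is a prime filter in the bounded distributive lattice $(\mc{L}_X^o,\cap,\cup,\emptyset,X)$: upward closure, stability under binary meets, and $\emptyset\notin\hat{x}$ are immediate from the pointwise definition, and primeness is the set-theoretic tautology that $x\in W_1\cup W_2$ forces $x\in W_1$ or $x\in W_2$. Hence $\hat{X}\subseteq \mc{PF}(\mc{L}_X^o)$ and $\widetilde{A}\cap \hat{X}=\{\hat{x}\mid x\in A\}$ is a well-defined subset of $\hat{X}$ for every $A\in \mc{L}_X^o$.

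Next I would establish both lattice isomorphisms $\mc{L}_X\cong \widetilde{\mc{L}_X}\cap_1 \hat{X}$ and $\mc{L}_X^o\cong \widetilde{\mc{L}_X^o}\cap_1 \hat{X}$ via the same assignment $\psi\colon A\mapsto \widetilde{A}\cap \hat{X}$. Surjectivity is built into the meaning of $\cap_1$, the lattice-homomorphism property follows from $\widetilde{A\cap B}=\widetilde{A}\cap \widetilde{B}$ and $\widetilde{A\cup B}=\widetilde{A}\cup \widetilde{B}$ (the latter using primeness of filters), and injectivity is the standard observation that if $A\neq B$ then any $x$ in the symmetric difference yields a single $\hat{x}\in\hat{X}$ separating the two images.

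The heart of the argument is the triple equality on the second displayed line. The inclusion $\widetilde{\mc{L}_X^o}\cap_1 \hat{X}\subseteq (\widetilde{\mc{L}_X})^o\cap_1 \hat{X}$ is direct, since for $A\in \mc{L}_X^o$ and $B\in \mc{L}_X$ one has $\widetilde{A}\cap \widetilde{B}=\widetilde{A\cap B}\in \widetilde{\mc{L}_X}$ by the very definition of $\mc{L}_X^o$. The inclusion $(\widetilde{\mc{L}_X})^o\cap_1 \hat{X}\subseteq (\widetilde{\mc{L}_X}\cap_1 \hat{X})^o$ is a routine set-theoretic check based on $(M\cap \widetilde{B})\cap \hat{X}=(M\cap \hat{X})\cap (\widetilde{B}\cap \hat{X})$. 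The step I expect to be the main obstacle is the remaining inclusion $(\widetilde{\mc{L}_X}\cap_1 \hat{X})^o\subseteq \widetilde{\mc{L}_X^o}\cap_1 \hat{X}$, which demands recovering a genuine element of $\mc{L}_X^o$ from an abstract open $M'\subseteq \hat{X}$. I would set $A=\{x\in X\mid \hat{x}\in M'\}$ and, for each $B\in \mc{L}_X$, use the assumption to write $M'\cap(\widetilde{B}\cap \hat{X})=\widetilde{C}\cap \hat{X}$ for some $C\in \mc{L}_X$; pulling back along $x\mapsto \hat{x}$ yields $A\cap B=C\in \mc{L}_X$, so that $A\in \mc{L}_X^o$, and then $M'=\widetilde{A}\cap \hat{X}$ follows from the definition of $A$ together with $M'\subseteq \hat{X}$. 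The delicate point is the pull-back identification, which rests on the fact that $\mc{L}_X$ (and hence $\mc{L}_X^o$) never separates two points $x,y$ with $\hat{x}=\hat{y}$; once this is noted, the chain of equalities closes up.
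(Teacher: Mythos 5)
Your proposal is correct, and it travels over the same bridge as the paper — the assignment $x\mapsto\hat{x}$ and the correspondence $A\mapsto\widetilde{A}\cap\hat{X}$ — but it handles the one genuinely nontrivial step by a different mechanism. The paper declares the chain $\widetilde{\mc{L}^o_X}\cap_1\hat{X}\s(\widetilde{\mc{L}_X})^o\cap_1\hat{X}\s(\widetilde{\mc{L}_X}\cap_1\hat{X})^o$ to be clear and then collapses it by invoking an isomorphism of the Boolean algebras $\mc{P}(X)$ and $\mc{P}(\hat{X})$ under which $\mc{L}_X$ and $\mc{L}^o_X$ correspond to $\widetilde{\mc{L}_X}\cap_1\hat{X}$ and $\widetilde{\mc{L}^o_X}\cap_1\hat{X}$; since the operation $(\cdot)^o$ is defined purely in terms of the ambient powerset, that isomorphism carries $\mc{L}^o_X=(\mc{L}_X)^o$ onto $(\widetilde{\mc{L}_X}\cap_1\hat{X})^o$ and the equalities follow. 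This appeal tacitly requires $x\mapsto\hat{x}$ to be a bijection of $X$ onto $\hat{X}$, i.e.\ a Kolmogorov-type separation hypothesis that is not present in the statement of the proposition (it is asserted for arbitrary locally small spaces, and is in fact used later for a not-necessarily-$T_0$ space in the spectralification theorem). Your closing step avoids this entirely: from $M'\in(\widetilde{\mc{L}_X}\cap_1\hat{X})^o$ you build $A=\{x\in X\mid\hat{x}\in M'\}$, verify $A\cap B=C$ whenever $M'\cap\widetilde{B}\cap\hat{X}=\widetilde{C}\cap\hat{X}$ with $B,C\in\mc{L}_X$ (so $A\in\mc{L}^o_X$), and recover $M'=\widetilde{A}\cap\hat{X}$; each equivalence used there ($x\in B\iff\hat{x}\in\widetilde{B}$, and the consistency of the definition of $A$ when $\hat{x}=\hat{y}$) holds without injectivity of $x\mapsto\hat{x}$. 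So your version is the more robust one, at the cost of being longer; the remaining ingredients (that each $\hat{x}$ is a prime filter of $\mc{L}^o_X$, the injectivity of $A\mapsto\widetilde{A}\cap\hat{X}$ via a point of the symmetric difference, and the two easy inclusions) coincide with what the paper leaves implicit.
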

\begin{proof}
It is clear that $\widetilde{\mc{L}^o_X} \cap_1 \hat{X}\s \widetilde{(\mc{L}_X})^o \cap_1 \hat{X} \s \widetilde{(\mc{L}_X}\cap_1 \hat{X})^o$. 
Moreover, the Boolean algebras $\mc{P}(X)$ and $\mc{P}(\hat{X})$ are isomorphic,  where the sublattice $\widetilde{\mc{L}^o_X} \cap_1 \hat{X}$ corresponds to $\mc{L}^o_X$ and
the sublattice $\widetilde{\mc{L}_X} \cap_1 \hat{X}$ corresponds to $\mc{L}_X$. That is why $\mc{L}_X \cong    \widetilde{\mc{L}_X} \cap_1 \hat{X}$ and
$\mc{L}_X^o \cong  \widetilde{\mc{L}^o_X} \cap_1 \hat{X}= \widetilde{(\mc{L}_X}\cap_1 \hat{X})^o \subseteq \mc{P}(\hat{X})$. 
\end{proof}

\begin{theorem} \label{lsso}
The categories $\mathbf{LSS}_0$, $\newlatz^{op}$ and $\newspec$ are equivalent.
\end{theorem}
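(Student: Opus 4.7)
The plan is to establish the two equivalences $\newlatz^{op}\simeq\newspec$ and $\mathbf{LSS}_0\simeq\newspec$ separately; the third then follows by composition. Both equivalences enrich the classical Stone duality of Remark~\ref{classical} by carrying along the extra data (bornology and decent lump). First I would define a functor $\newlatz^{op}\to\newspec$ sending $(L,L_s,\mathbf{D}_L)$ to $((\PF(L),\tau(\widetilde{L})),\widetilde{L_s},\mathbf{D}_L)$. Classical Stone duality identifies $L$ with $CO(\PF(L))=\widetilde{L}$ as lattices, so $L_s$ becomes the bornology $\widetilde{L_s}$, and conditions (1)--(3) of Definition~\ref{newlatz} translate term by term into conditions (1)--(3) of Definition~\ref{newspec}. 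The quasi-inverse sends $((X,\tau_X),CO_s(X),X_d)$ to $(CO(X),CO_s(X),\{\hat{x}:x\in X_d\})$. On morphisms, the condition of domination and the condition of boundedness match through the same lattice isomorphisms, while respecting the decent lump transfers via the operation $h^{\bullet}$.

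For the second equivalence I would send $(X,\mc{L}_X)\in\mathbf{LSS}_0$ to $((\PF(\mc{L}_X^o),\tau(\widetilde{\mc{L}_X^o})),\widetilde{\mc{L}_X},\hat{X})$. I first verify that $\mc{L}_X$ is genuinely a bornology in $\mc{L}_X^o$: if $B\s A\in\mc{L}_X$ with $B\in\mc{L}_X^o$, then $B=B\cap A\in\mc{L}_X$ by the definition of $\mc{L}_X^o$, combined with LS2 and LS3. Condition (3) of Definition~\ref{newspec} is precisely Proposition~\ref{zaloz}, and decency of $\hat{X}$ is the $T_0$ hypothesis. The quasi-inverse sends $((X,\tau_X),CO_s(X),X_d)$ to $(X_d,CO_s(X)\cap_1 X_d)$: LS1--LS3 come from $CO_s(X)$ being an ideal together with $X_d\s\bigcup CO_s(X)$, and $T_0$ is decency of $X_d$. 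A bounded continuous $f:(X,\mc{L}_X)\to(Y,\mc{L}_Y)$ induces the preimage homomorphism $f^{-1}:\mc{L}_Y^o\to\mc{L}_X^o$, well-defined because for $M\in\mc{L}_Y^o$ and $A\in\mc{L}_X$, picking $B\in\mc{L}_Y$ with $A\s f^{-1}(B)$ yields $f^{-1}(M)\cap A=f^{-1}(M\cap B)\cap A\in\mc{L}_X$. Boundedness of $f$ translates to the domination condition in $\newlatz$, hence to the boundedness condition in $\newspec$; the induced spectral map sends $\hat{x}$ to $\widehat{f(x)}$ and so respects $\hat{X}$. Conversely, a $\newspec$-morphism $g$ restricts to a bounded continuous $g|_{X_d}:X_d\to Y_d$: boundedness is a renaming, and continuity uses the ideal property of $CO_s$ to obtain $g^{-1}(V)\cap U\in CO_s(X)$ for $U\in CO_s(X)$ and $V\in CO_s(Y)$.

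Finally, the unit $x\mapsto\hat{x}$ is a locally small isomorphism by $T_0$ and Proposition~\ref{zaloz}, while the counit invokes condition (3) of Definition~\ref{newspec} to rewrite $(CO_s(X)_d)^o=CO(X)_d\cong CO(X)$, after which classical Stone duality identifies $\PF((CO_s(X)_d)^o)$ with the original spectral space and carries $\widehat{X_d}$ back onto $X_d$; patch density (Proposition~\ref{patch-dense}) lurks in the background ensuring decency is preserved under these identifications. The main obstacle I anticipate is condition (3) of Definitions~\ref{newspec} and \ref{newlatz}: it is not mere book-keeping but the genuine compatibility forcing the bornology of ``small'' compact open sets to recover the locally small space from its spectralification, and verifying it both on objects and along the morphism-level chain (boundedness $\leftrightarrow$ domination $\leftrightarrow$ boundedness of the spectral restriction) is where the core technical work will lie. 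Naturality of the resulting transformations should then be routine.
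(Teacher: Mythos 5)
Your proof is correct and takes essentially the same route as the paper: your functors are exactly the paper's $\bar{S}$, $\bar{S}\bar{A}$ and $\bar{R}$ (your lattice-side quasi-inverse $((X,\tau_X),CO_s(X),X_d)\mapsto(CO(X),CO_s(X),\widehat{X_d})$ agrees with $\bar{A}\bar{R}$ via $(CO_s(X)_d)^o=CO(X)_d\cong CO(X)$), and the substantive checks --- $\mc{L}_X$ is a bornology in $\mc{L}^o_X$, condition $(3)$ via Proposition~\ref{zaloz}, boundedness $\leftrightarrow$ domination, unit and counit through $x\mapsto\hat{x}$ --- coincide with the paper's Steps 1--6, merely repackaged as two pairwise equivalences rather than the paper's cyclic check of the three composites. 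One attribution to correct: decency of $\hat{X}$ in $\PF(\mc{L}^o_X)$ holds automatically (the trace $\widetilde{A}\cap\hat{X}$ recovers $A$ for every $A\in\mc{L}^o_X$), the $T_0$ hypothesis being what makes $x\mapsto\hat{x}$ injective, and dually the $T_0$-ness of $(X_d,CO_s(X)_d)$ is not decency of $X_d$ but follows from $CO(X)$ being a basis of the $T_0$ topology $\tau_X$ together with condition $(1)$ and the ideal property of $CO_s(X)$.
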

\begin{proof}

\textbf{Step 1:} Defining functor $\bar{R}:\newspec\to\mathbf{LSS}_0$.

We define
the \textit{restriction functor} $\bar{R}: \newspec \to \mathbf{LSS}_0$ by formulas
$$ \bar{R}((X,\tau_X),CO_s(X),X_d)=(X_d, CO_s(X)_d), \quad \bar{R}(g)=g_d,$$
where $g_d:X_d \to Y_d$ is the restriction of $g: X \to Y$ in the domain and in the codomain to the decent  lumps.
It is clear that $CO_s(X)_d$ is a  sublattice of $\mc{P}(X_d)$ with zero that covers $X_d$. 
Now $CO(X)$  separates points of $X$, since it  is a basis of the topology $\tau_X$. Hence both  $CO(X)_d$  and $CO_s(X)_d$  separate points of $X_d$.

For a morphism $g:X\to Y$ in $\newspec$, we have
$$g_d^{-1}(CO_s(Y)_d) \subseteq g^{-1}(CO(Y)_d) \cap_1 X_d \subseteq CO(X)_d\subseteq (CO_s(X)_d)^o$$ 
by $(3)$ of Definition \ref{newspec}, so $g_d:(X_d,CO_s(X)_d)\to (Y_d,CO_s(Y)_d)$ is continuous.
That  $g_d$ is a bounded mapping between locally small spaces follows from~$g$
satisfying the condition of boundedness.
%% Moreover, $\bar{R}(id_{X})=id_{X_d}$ and $\bar{R}(h\circ g)=(h\circ g)_d=h_d \circ g_d=\bar{R}(h) \circ \bar{R}(g)$.
Since the rest of the conditions are obvious, $\bar{R}$ is  indeed  a functor. 

\noindent \textbf{Step 2:} 
Defining functor $\bar{S}:\newlatz^{op}  \to \newspec$.

We define 
the \textit{spectrum functor} 
$\bar{S}:   \newlatz^{op}   \to \newspec$  by  
$$\bar{S}(L, L_s, \mathbf{D}_L)=((\mc{PF}(L),\tau(\widetilde{L}) ), \widetilde{L_s}, \mathbf{D}_L ), 
\quad  \bar{S}(h^{op})=h^{\bullet}, $$ 
where  $\tau(\widetilde{L})$ is as in Remark \ref{classical}  and  $h^{op}$ in  $\newlatz^{op}$ is the morphism $h$ in $\newlatz$ inverted.
%%Hence  $(\mc{PF}(L),\tau(\widetilde{L}))$ is a spectral space,
The lattice $CO_s(\mc{PF}(L))=\widetilde{L_s}$ is a  bornology in $CO(\mc{PF}(L))$. Moreover,
we have an isomorphism of lattices  
$$CO(\mc{PF}(L))=\tilde{L} \ni \tilde{a} \: \mapsto \: \tilde{a}^d \in \widetilde{L}^d=CO(\mc{PF}(L)) \cap_1  \mathbf{D}_L$$
 and   $ \mathbf{D}_L\subseteq \bigcup  \widetilde{L_s}$,  by Definition \ref{newlatz}.
Now $(3)$ of Definition \ref{newspec} follows from $(3)$ of Definition \ref{newlatz},  so  $\mathbf{D}_L$ is a decent lump.

For a morphism $h:L\to M$  of $\newlatz$ we have 
$(h^{\bullet})^{-1}(\tilde{b})=\{ {G}\in \mc{PF}(M)\mid b\in h^{\bullet} {G}  \}=\widetilde{h(b)}$ for $b\in L$.
This means  $ (h^{\bullet})^{-1} ( \widetilde{L} ) \subseteq \widetilde{M}$, so
 $h^{\bullet}:\mc{PF}(M)\to \mc{PF}(L)$ is spectral,  satisfies the condition of boundedness
%%%$$ \forall A \in CO_s(\mc{PF}(M)) \exists  B \in CO_s(\mc{PF}(L)) \quad  h^{\bullet}(A)\subseteq B$$ 
and respects the decent lump: $h^{\bullet}(\mathbf{D}_M)\subseteq \mathbf{D}_L$.
Since the rest of the conditions are obvious,  $\bar{S}$ is indeed a functor.

\noindent \textbf{Step 3:} Defining functor $\bar{A}:\mathbf{LSS}_0\to \newlatz^{op} $.

We define the \textit{algebraization functor} $\bar{A}:\mathbf{LSS}_0 \to \newlatz^{op}$  by 
$$\bar{A}(X,\mc{L}_X)=(\mc{L}^o_X, \mc{L}_X, \hat{X}), \quad \bar{A}(f)= (\mc{L}^o f)^{op},$$
where $ \mc{L}^o_X=(\mc{L}^o_X, \cup, \cap, \emptyset, X)$ is a bounded distributive lattice, 
 $\hat{X}=\hat{X}(\mc{L}^o_X)=\{\hat{x}\mid x\in X \} \subseteq \mc{PF}(\mc{L}^o_X)$ with 
$\hat{x}=\{A\in \mc{L}^o_X \mid x\in A \}$, 
and, for a strictly continuous mapping $f:(X,\mc{L}_X) \to (Y,\mc{L}_Y)$,  the mapping $\mc{L}^of:\mc{L}^o_Y \to \mc{L}^o_X$
 is  defined by $(\mc{L}^o f)(W)=f^{-1}(W)$ for $W\in \mc{L}^o_Y$. 
 
The lattice  $\mc{L}_X$ is a  bornology in $\mc{L}_X^o$ by the definition of $\mc{L}_X^o$. 
By the proof of Proposition \ref{zaloz},  $ (\mc{L}^o_X, \mc{L}_X, \hat{X})$ satisfies $(3)$ of Definition \ref{newlatz} and
$\hat{X}(\mc{L}^o_X)$ is a decent lump of prime filters on $\mc{L}^o_X$.
 
Moreover, $\mc{L}^o f: \mc{L}^o_Y \to \mc{L}^o_X$ is   a  morphism in $\newlatz$ as a homomorphism of bounded lattices satisfying  
$$(\mc{L}^of)^{\bullet}(\hat{x})=\{W\in \mc{L}^o_Y\mid x\in f^{-1}(W) \} =\widehat{f(x)} \mbox{, so }
(\mc{L}^o f)^{\bullet}(\hat{X})\subseteq \hat{Y},$$
with the domination condition
%%$$\forall A \in \mc{L}_X \: \exists B \in \mc{L}_Y \quad A \subseteq (\mc{L}^o f) (B)=f^{-1}(B)$$
being the boundedness of the strictly continuous mapping $f$.
Since the rest of the conditions are obvious,  $\bar{A}$ is  indeed a functor. 

\noindent \textbf{Step 4:} The functor $\bar{R}\bar{S}\bar{A}$ is naturally isomorphic to  $Id_{\mathbf{LSS}_0}$.

We have $\bar{R}\bar{S}\bar{A}(X, \mc{L}_X)=
\bar{R}\bar{S}(\mc{L}_X^o, \mc{L}_X,\hat{X} )=
\bar{R}(\mc{PF}(\mc{L}_X^o),\widetilde{\mc{L}_X}, \hat{X})=(\hat{X},\widetilde{\mc{L}_X}^d)$, where $\widetilde{\mc{L}_X}^d=\widetilde{\mc{L}_X}\cap_1 \hat{X}$, 
and,  for a morphism $f:(X, \mc{L}_X) \to (Y, \mc{L}_Y)$ in $\mathbf{LSS}_0$, 
we have $\bar{R}\bar{S}\bar{A}(f)=((\mc{L}^o f)^{\bullet})_d$.

Define a natural transformation  $\eta$ from 
$\bar{R}\bar{S}\bar{A}$ to $Id_{\mathbf{LSS}_0}$  by 
$$\eta_X(\hat{X},\widetilde{\mc{L}_X}^d) \to (X, \mc{L}_X), \mbox{ where }    \eta_X(\hat{x})=x.$$ 
Then   $f\circ \eta_X(\hat{x})= \eta_Y \circ ((\mc{L}^o f)^{\bullet})_d (\hat{x})=f(x)$
for $\hat{x}\in \hat{X}$ and, by the obvious  isomorphism between $\mc{P}(X)$ and $\mc{P}(\hat{X})$
(compare Proposition \ref{zaloz}),
 each $\eta_X$ is an isomorphism 
 in $\mathbf{LSS}_0$, so $\eta$ is truely a~natural isomorphism.

\noindent \textbf{Step 5:}
 The functor $\bar{S}\bar{A} \bar{R}$ is naturally isomorphic to 
$Id_{\newspec}$.

We have $\bar{S}\bar{A}\bar{R}((X,\tau_X),CO_s(X) ,X_d)=\bar{S}\bar{A}(X_d,CO_s(X)_d)=$ \\
$\bar{S}((CO_s(X)_d)^o, CO_s(X)_d, \widehat{X_d}^d)=
(\mc{PF}((CO_s(X)_d)^o) ,\widetilde{CO_s(X)_d},\widehat{X_d}^d)$, \\
with the topology $\tau(\widetilde{CO(X)_d})$ on $\mc{PF}((CO_s(X)_d)^o)$, since, by Definition \ref{newspec}, we have $(CO_s(X)_d)^o = CO(X)_d$.
Here  we put  $\widehat{X_d}^d=\{ \hat{x}^d \mid  x\in X_d \}$  and 
$\hat{x}^d=\{ V\cap X_d \mid x\in V  \in CO(X)   \} \in  \mc{PF}(CO(X)_d)$ 
for  $x\in X$. 
For a morphism $g:X\to Y$ in $\newspec$,
%%satisfying$g(X_d)\subseteq Y_d$ and $$ \forall A\in CO_s(X) \: \exists B\in CO_s(Y) \quad g(A)\subseteq B,$$
 we have $\bar{S}\bar{A}\bar{R}(g)=
(\mc{L}^o  g_d)^{\bullet}$.

Define a natural transformation $\theta$ from $\bar{S}\bar{A}\bar{R}$ to 
$Id_{\newspec}$ by 
$$\theta_X: (\mc{PF}(CO(X)_d),\widetilde{CO_s(X)_d}, \widehat{X_d}^d) \to (X,CO_s(X), X_d)\mbox{ with } \theta_X(\hat{x}^d)=x.$$ 
Notice that $(\mc{L}^o  g_d)^{\bullet}(\hat{x}^d)=\{ W\cap Y_d \mid  g(x)\in W\in CO(Y)\} =\widehat{g(x)}^d$ for $x\in X$.
This means  $g\circ \theta_X=\theta_Y\circ (\mc{L}^o g_d)^{\bullet}$ and each $\theta_X$  satisfies
$\theta_X(\widehat{X_d}^d)=X_d$ and   $\theta_X (\widetilde{A_d}) =\theta_X (\{ \hat{x}^d \in \mc{PF}(CO(X)_d)\mid  x\in A \}) = A$ for $A\in CO(X)$, so
$\theta_X^{-1}(CO(X))=\widetilde{CO(X)_d}$ and  $\theta_X^{-1}(CO_s(X))=\widetilde{CO_s(X)_d}$. Hence $\theta$ is truely a natural isomorphism.

\noindent \textbf{Step 6:}
The functor $\bar{A}\bar{R}\bar{S}$ is naturally isomorphic to $Id_{\newlatz^{op}}$.

We get $\bar{A}\bar{R}\bar{S}(L, L_s, \mathbf{D}_L)=\bar{A}\bar{R}((\mc{PF}(L), \tau(\widetilde{L})),\widetilde{L_s},\mathbf{D}_L)= \bar{A}(\mathbf{D}_L, \widetilde{L_s} \cap_1 \mathbf{D}_L)=
( (\widetilde{L_s} \cap_1 \mathbf{D}_L)^o ,\widetilde{L_s} \cap_1 \mathbf{D}_L, \widehat{\mathbf{D}_L}^d)$. 
Here $\widehat{\mathbf{D}_L}^d=\{ \widehat{{F}}^d\mid  {F}\in \mathbf{D}_L\}$, where  $\widehat{{F}}^d=\{ \tilde{a}^d \in \widetilde{L}^d \mid {F}\in \tilde{a}^d \}$,
 $\tilde{a}^d =\{ {F}\in \mathbf{D}_L\mid  a \in {F} \}$.
 By  Definition \ref{newlatz}, we have $(\widetilde{L_s} \cap_1 \mathbf{D}_L)^o = \widetilde{L} \cap_1 \mathbf{D}_L$, shortly: $(\widetilde{L_s}^d)^o=\widetilde{L}^d$.
For a morphism  $h:L\to M$ in $\newlatz$,
%%satisfying $h^{\bullet}(\mathbf{D}_M)\subseteq \mathbf{D}_L$and $ \forall a\in M_s \: \exists b\in L_s \quad a \vee h(b) = h(b)$, 
we have $\bar{A}\bar{R}\bar{S}(h^{op})=(\mc{L}^o (h^{\bullet})_d )^{op}$.

Define a natural transformation  $\kappa^{op}$ from $\bar{A}\bar{R}\bar{S}$ to 
$Id_{\newlatz^{op}}$ by  putting
$\kappa^{op}_L:(\widetilde{L}^d, \widetilde{L_s}^d,\widehat{\mathbf{D}_L}^d) \to (L, L_s,\mathbf{D}_L)$ in  $\newlatz^{op}$ to be the map
$$\kappa_L:(L, L_s, \mathbf{D}_L) \to (\widetilde{L}^d,\widetilde{L_s}^d, \widehat{\mathbf{D}_L}^d)
\mbox{ given by }  
\kappa_L(a)=\tilde{a}^d.$$ 
We are to check that 
$ \kappa_L^{op} \circ  \bar{A}\bar{R}\bar{S}(h^{op}) =  h^{op}\circ \kappa_M^{op}$ or 
$\kappa_M\circ h =\mc{L}^o (h^{\bullet})_d \circ \kappa_L$.
Now $(\mc{L}^o (h^{\bullet})_d\circ \kappa_L)(a)= (h^{\bullet})_d^{-1}(\tilde{a}^d)=
\{ {G}\in \mathbf{D}_M \mid  h(a)\in {G} \}  =\widetilde{h(a)}^d=
(\kappa_M \circ h)(a)$.   
Each $\kappa_L:L\to \widetilde{L}^d$ is an  isomorphism of bounded lattices satisfying $\kappa_L^{\bullet}(\widehat{\mathbf{D}_L}^d)=\mathbf{D}_L$  and 
$\kappa_L(L_s)= \widetilde{L_s}^d$,  so $\kappa^{op}$ is truely a~natural isomorphism.
\end{proof}

\begin{exam}
The sine mapping $\sin:\mb{R}_{lom} \to \mb{R}_{lom}$ is bounded continuous but not strongly continuous.
Consequently, $\bar{S}\bar{A}(\sin)=(\mc{L}^o \sin)^{\bullet}\!: \PF(\mc{L}^o_{lom}) \to \PF(\mc{L}^o_{lom})$ is spectral but 
$(\mc{L}^o \sin ) (\mc{L}_{lom})$ is not contained in $\mc{L}_{lom}$.
\end{exam}

\begin{theorem} \label{sso}
The categories $\mathbf{SS}_0$,  $\mathbf{LatD}^{op}$ and 
$\mathbf{SpecD}$ are  equivalent.
\end{theorem}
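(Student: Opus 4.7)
The plan is to imitate the proof of Theorem \ref{lsso}, with the bornology data trivialised. For a small space $(X,\mc{L}_X)$ the condition $X\in \mc{L}_X$ forces $\mc{L}_X^o=\mc{L}_X$, so no auxiliary bornology is needed. Symmetrically, on the spectral side one takes $CO_s(X)=CO(X)$, which collapses Definition \ref{newspec} to the bare requirement that $X_d$ be a decent subset of $X$; on the lattice side one takes $L_s=L$, turning a decent lump of prime filters into a decent set (compare Proposition \ref{patch-dense}). Consequently the boundedness and domination conditions on morphisms become automatic: a continuous map between small spaces pulls back the top smop $Y$ to $X$, and any bounded-lattice homomorphism sends $1$ to $1$.

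First I would introduce three functors in parallel to Steps~1--3 of the proof of Theorem \ref{lsso}. The restriction functor $R:\mathbf{SpecD}\to \mathbf{SS}_0$ sends $((X,\tau_X),X_d)$ to the small space $(X_d, CO(X)_d)$ — which is small because $X\in CO(X)$ gives $X_d\in CO(X)_d$ — and a spectral morphism $g$ to its restriction $g_d$. The spectrum functor $S:\mathbf{LatD}^{op}\to \mathbf{SpecD}$ sends $(L,\mathbf{D}_L)$ to $((\PF(L),\tau(\widetilde{L})), \mathbf{D}_L)$ and $h^{op}$ to $h^{\bullet}$. The algebraization functor $A:\mathbf{SS}_0\to \mathbf{LatD}^{op}$ sends $(X,\mc{L}_X)$ to $(\mc{L}_X, \hat{X})$ and a continuous $f$ to $(\mc{L}f)^{op}$ with $(\mc{L}f)(W)=f^{-1}(W)$. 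Well-definedness on objects follows from Remark \ref{classical} together with Proposition \ref{patch-dense}.

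Then I would construct the three natural isomorphisms $RSA\cong Id_{\mathbf{SS}_0}$, $SAR\cong Id_{\mathbf{SpecD}}$ and $ARS\cong Id_{\mathbf{LatD}^{op}}$ exactly as in Steps~4--6 of Theorem \ref{lsso}: namely $\eta_X(\hat{x})=x$, $\theta_X(\hat{x}^d)=x$ and $\kappa_L(a)=\tilde{a}^d$. Each verification is parallel to the locally small case but streamlined by the absence of bornology bookkeeping; the key inputs are the isomorphism $\mc{L}_X\cong \widetilde{\mc{L}_X}\cap_1 \hat{X}$ from Proposition \ref{zaloz} (applied with $\mc{L}_X^o=\mc{L}_X$) and Fact \ref{17}.

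I expect no substantial obstacle beyond careful bookkeeping. A still cleaner presentation would be to view $\mathbf{SS}_0$, $\mathbf{SpecD}$, $\mathbf{LatD}^{op}$ as the full subcategories of $\mathbf{LSS}_0$, $\newspec$, $\newlatz^{op}$ respectively consisting of those objects whose bornology contains the top element (i.e.\ $X\in \mc{L}_X$, $X\in CO_s(X)$, $1\in L_s$), and then simply to restrict the equivalence of Theorem \ref{lsso}. The only thing to check for this reduction is that the three functors of Theorem \ref{lsso} carry these marked subcategories to one another, which is immediate from the definitions; this would be the most efficient route to present.
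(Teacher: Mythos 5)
Your proposal is correct and is essentially the paper's own argument: the paper proves Theorem \ref{sso} precisely by restricting the proof of Theorem \ref{lsso} to the case $\mc{L}_X^o=\mc{L}_X$, $L_s=L$ and $CO_s(X)=CO(X)$, which is exactly your trivialisation of the bornology data (and your observations that smallness forces $\mc{L}_X^o=\mc{L}_X$ and that boundedness/domination become automatic are the right justifications for why this restriction works).
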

\begin{proof} In the proof of  Theorem \ref{lsso},  we restrict to the case  $\mc{L}_X^o=\mc{L}_X$,  $L_s=L$  and $CO_s(X)=CO(X)$.
\end{proof}
%%%

\section{Spectralifications}

\begin{definition}
A topological space $(Y, \tau_Y)$ will be called a \textit{spectralification} of a topological space 
$(X, \tau_X)$ if: 
\begin{enumerate}
\item[$(1)$]  $(Y, \tau_Y)$ is a spectral space, 
\item[$(2)$]  there exists a topological embedding 
$e:(X, \tau_X) \to (Y, \tau_Y)$,
\item[$(3)$]     $e(X)$ is patch dense in $(Y, \tau_Y)$.
\end{enumerate}
\end{definition}

\begin{exam} \label{om}
The space $(\widetilde{\mb{R}}, \tau(\mc{B}))$ from Example \ref{spectral} is a spectralification of the real line (with the natural topology),  homeomorphic to  $(\mc{PF}(\mc{L}_{om}), \tau(\widetilde{\mc{L}_{om}}))$. 
\end{exam}

\begin{exam}  \label{rom}
Consider $\mc{L}_{rom}$ from Example \ref{spaces}.
The points of $\mc{PF}(\mc{L}_{rom})$  are:
$$ \hat{r}=\{A\in \mc{L}_{rom}\mid  r\in A  \} \qquad \mbox{ for }r\in \mathbb{R},$$
$$ \hat{q}^-=\{ A\in \mc{L}_{rom}\mid (l,q)\subseteq  A \mbox{ for some }l<q  \} \mbox{ for }q\in \mathbb{Q},$$
$$ \hat{q}^+=\{ A\in \mc{L}_{rom}\mid (q,l)\subseteq A   \mbox{ for some } l>q  \} \mbox{ for }q\in \mathbb{Q},$$
$$ \widehat{-\infty}=\{ A \in \mc{L}_{rom}\mid  (-\infty,l)\subseteq A \mbox{ for some } l \in \mathbb{Q}  \},$$
$$ \widehat{+\infty}=\{ A\in \mc{L}_{rom} \mid (l,+\infty)\subseteq  A   \mbox{ for some } l \in \mathbb{Q} \}.$$
Here $\widetilde{\mc{L}_{rom}}=\{ \widetilde{A}\mid A\in \mc{L}_{rom}\}, \quad 
\widetilde{A}=\{ {F}\in \mc{PF}(\mc{L}_{rom})\mid A\in {F}  \}$.     

We can see that $(\mc{PF}(\mc{L}_{rom}), \tau(\widetilde{\mc{L}_{rom}}))$ 
is another spectralification of the real line,  homeomorphic to the space of types over 
$\mathbb{Q}$  of the theory $Th(\mathbb{R}, <)$  with the spectral topology (compare \cite[Section 14.2]{DST}),
obtained without using the language of model theory. 
\end{exam}

\begin{exam}
For the patch  topology on the ``same'' set of points as in the previous example, one  takes as the new  $\mc{L}_X$
the Boolean algebra $\mc{B}_{rom}$ generated by $\mc{L}_{rom}$. This changes the topology on $\mb{R}$: the rational points become isolated.
The space $(\mc{PF}(\mc{B}_{rom}), \tau(\widetilde{\mc{B}_{rom}}))$ is a Hausdorff spectralification of this modified real line, and is identified with the (usual in model theory) space of types over 
$\mathbb{Q}$  of the theory $Th(\mathbb{R}, <)$.
\end{exam}

\begin{theorem}
For a topological space, 
being   $T_0$ is equivalent to
admitting  a spectralification.
\end{theorem}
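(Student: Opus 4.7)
The plan is to treat the two implications separately.

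For the reverse direction, suppose $(X,\tau_X)$ admits a spectralification $(Y,\tau_Y)$. By axiom (S4) the space $(Y,\tau_Y)$ is $T_0$, and since the $T_0$ property is inherited by subspaces and $(X,\tau_X)$ embeds topologically into $(Y,\tau_Y)$, the space $(X,\tau_X)$ must be $T_0$. This half is trivial.

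For the forward direction, my plan is to regard a $T_0$ topological space $(X,\tau_X)$ as a Kolmogorov small space by setting $\mc{L}_X := \tau_X$: axioms (LS1)--(LS3) are immediate, $X \in \tau_X$ makes it a small space, and the $T_0$ property transfers verbatim. I then invoke Theorem \ref{sso}: applying $\bar{S}\bar{A}$ produces the object $((\mc{PF}(\tau_X),\tau(\widetilde{\tau_X})),\hat{X})$ of $\mathbf{SpecD}$, where $\hat{X}=\{\hat{x}\mid x\in X\}$ with $\hat{x}=\{A\in\tau_X\mid x\in A\}$. The candidate embedding $e:X \to \mc{PF}(\tau_X)$ given by $e(x)=\hat{x}$ is bijective onto $\hat{X}$ (injectivity uses $T_0$), and the identity $e^{-1}(\widetilde{A} \cap \hat{X})=A$ for each $A \in \tau_X$ shows that the subspace topology on $\hat{X}$ pulls back to $\tau_X$, making $e$ a topological embedding. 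Patch density of $e(X)=\hat{X}$ is immediate from Proposition \ref{patch-dense}, since $\hat{X}$ is decent by construction.

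The only point requiring care is the passage from the $\mathbf{SS}_0$-isomorphism delivered by Theorem \ref{sso} to a bona fide topological embedding in the classical sense; but when $\mc{L}_X=\tau_X$ one also has $\mc{L}_X^o=\tau_X$ (since $X\in\tau_X$ forces $M=M\cap X\in\tau_X$ for every $M\in\mc{L}_X^o$), so morphisms in $\mathbf{SS}_0$ reduce to ordinary continuous maps between topological spaces and the translation is automatic. Thus no substantial obstacle remains, and the theorem follows by packaging Theorem \ref{sso} together with Proposition \ref{patch-dense}.
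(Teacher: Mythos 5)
Your proof is correct and follows essentially the same route as the paper: embed $X$ into $\mc{PF}(\mc{L}_X)$ via $x\mapsto\hat{x}$ for a bounded sublattice basis $\mc{L}_X$ of $\tau_X$ and verify patch density of the image $\hat{X}$; you specialize to $\mc{L}_X=\tau_X$ where the paper allows an arbitrary bounded sublattice basis, and you route patch density through the decency criterion of Proposition \ref{patch-dense} where the paper checks it directly on basic patch-open sets $\widetilde{A}\setminus\widetilde{B}$ --- both differences are cosmetic. The converse direction (subspaces of $T_0$ spaces are $T_0$) is identical.
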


\begin{proof} For $(X, \tau_X)$  a $T_0$ topological space,
choose a basis $\mc{L}_X$ of $\tau_X$  that is a bounded sublattice.
By Step 4 of the proof of Theorem \ref{lsso}, 
$(X, \tau_X)$  embeds into the spectral space 
$(\mc{PF}(\mc{L}_X),\tau( \widetilde{\mc{L}_X}))$ whose patch topology has
  a basis   $ \widetilde{\mc{L}_X}\setminus_1 \widetilde{\mc{L}_X}$.
Each nonempty member of this basis  $\widetilde{A} \setminus \widetilde{B}$ admits $x\in A\setminus B$. Then $\hat{x} \in \widetilde{A} \setminus \widetilde{B}$, so the image $\hat{X}$ of the embedding is patch dense in
$(\mc{PF}(\mc{L}_X),\tau( \widetilde{\mc{L}_X}))$. Since a subspace
of a $T_0$ space is $T_0$, only $T_0$ topological spaces can have spectralifications.
\end{proof}

%%\begin{rem}
%%The theorem above allows producing many spectralifcations of a Kolmogorov  topological space $(X, \tau_X)$  by taking many different bases $\mc{L}_X$ of  the topology 
%%$\tau_X$.
%%That is why it is more versatile than Theorem 11.1.3(ii) of \cite{DST} concerning  the spectral reflection of a Kolmogorov  topological space.
%%\end{rem}

%%\section{Stone Duality for $\mathbf{LSS}^s_0$}
\section{The Categories $\mathbf{slSpec}$ and $\mathbf{slSpec}^s$}

\begin{definition}
For a topological space  $(X,\tau_X)$, we denote:
$$SO(X)=\{ U\in \tau_X \mid  U \mbox{ has spectral subspace topology}\}.$$ 
\end{definition}

\begin{definition}
A topological space  $(X,\tau_X)$ is \textit{strongly locally spectral} if it satisfies the following conditions:
\begin{enumerate}
\item[$(1)$]  it is \textit{locally spectral} (\cite{E}): $ SO(X)$ covers $X$,
\item[$(2)$]  it is \textit{semispectral} (\cite{E}): $CO(X) \cap_1 CO(X) \subseteq CO(X)$.
\end{enumerate}
\end{definition}

\begin{proposition}
For any   strongly locally spectral space $(X,\tau_X)$, we have 
$$CO(X)=SO(X).$$ 
\end{proposition}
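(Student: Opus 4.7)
My plan is to prove the two inclusions separately, with the harder direction being $CO(X)\s SO(X)$.

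For $SO(X)\s CO(X)$ the argument is immediate: if $U\in SO(X)$ then $U$ is spectral in its subspace topology, so (S1) yields $U\in CO(U)$; compactness of a subspace is intrinsic, so $U$ is compact in $X$, and it is open in $X$ by the definition of $SO(X)$, giving $U\in CO(X)$.

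For $CO(X)\s SO(X)$ I will take $V\in CO(X)$ and verify the five spectral axioms for the subspace $V$. (S1) is trivial. For (S2) I will use local spectrality: given $x\in V$ and an open neighbourhood $O$ of $x$ in $V$, I pick $U\in SO(X)$ with $x\in U$; since $CO(U)$ is a basis of $U$ by (S2) for $U$, there exists $W\in CO(U)$ with $x\in W\s O\cap U$, and $W$ is intrinsically compact, contained in $V$, and open in $V$, so $W\in CO(V)$. For (S3) I will use semispectrality of $X$: if $A,B\in CO(V)$ then $A,B$ are also compact and open in $X$ (because $V$ is open in $X$), hence $A\cap B\in CO(X)$, and since $A\cap B\s V$ it lies in $CO(V)$. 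For (S4) I first observe that $X$ is $T_0$: two distinct points of $X$ are either separated by disjoint members of $SO(X)$ or lie in a common $U\in SO(X)$, which is $T_0$ as a spectral space; then the subspace $V$ inherits $T_0$.

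The main obstacle is (S5), sobriety of $V$, which I plan to handle in two steps. First I will establish sobriety of $X$ itself by a local-to-global argument: given a non-empty irreducible closed $C\s X$, pick $x_0\in C$ and $U\in SO(X)$ with $x_0\in U$; then $C\cap U$ is non-empty, closed and irreducible in the sober space $U$, hence admits a unique generic point $y\in U$, and irreducibility of $C$ forces $C\cap U$ to be dense in $C$, so $\cl\{y\}=C$ in $X$, with uniqueness of $y$ following from $T_0$-ness of $X$. Second, I will invoke the classical fact that an open subspace of a sober space is sober: for an irreducible closed $Z\s V$, its $X$-closure $\cl Z$ is irreducible in $X$ with generic point $z$, and since every open of $X$ containing $z$ meets $\cl Z$ while $Z\s V$, the point $z$ must lie in $V$, giving $Z=\cl\{z\}\cap V$. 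The care needed lies in matching the paper's definition of sobriety (via decompositions of proper open sets) with the standard formulation in terms of irreducible closed sets.
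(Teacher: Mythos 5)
Your proof is correct, but it takes a genuinely different route from the paper's. The paper argues globally: a compact $A\in CO(X)$ is covered by finitely many $W_1,\dots,W_n\in SO(X)$; the union $W_1\cup\dots\cup W_n$ is spectral because the overlaps $W_i\cap W_j$ are compact open (by semispectrality) and a finite union of spectral spaces glued along compact open subsets is spectral; and finally $A$, being a compact open subset of a spectral space, is itself spectral. You instead verify (S1)--(S5) for the subspace $V\in CO(X)$ directly. This is longer but self-contained: the paper's gluing lemma is asserted without proof and the fact that compact opens of spectral spaces are spectral is imported from the literature, whereas your argument uses only the two defining properties of strongly locally spectral spaces plus standard facts about sober spaces, and as a byproduct it establishes that every strongly locally spectral space is $T_0$ and sober (local spectrality alone suffices for that part), which is of independent interest. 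Two small points to tidy. First, in your (S4) step the dichotomy ``separated by disjoint members of $SO(X)$ or contained in a common $U$'' is not exhaustive as stated; the correct split is: either some $U\in SO(X)$ contains exactly one of the two points (then $U$ itself witnesses $T_0$), or every member of $SO(X)$ containing one contains both, in which case they lie in a common spectral, hence $T_0$, open subspace. Second, you should indeed write out the translation of the paper's (S5): taking complements, it says every nonempty closed set is either a nontrivial union of two closed sets or the closure of a point, i.e.\ every irreducible closed set has a generic point (unique by $T_0$); with that identification your local-to-global sobriety argument and the passage to open subspaces go through exactly as you describe.
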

\begin{proof} Obviously, $SO(X)\subseteq CO(X)$.
Let $A\in CO(X)$. Then $A$ is covered by a finite family $W_{1},...,W_n$ of spectral open sets. 
%%\begin{claim}
Since a finite union of spectral spaces glued together along  compact open subsets is spectral,
%%\end{claim}
%\begin{proof}
%%Conditions (S1), (S2), (S4)  are obviously satisfied, and  (S3) follows from the semispectrality of $X$. One needs to show that (S5) is also satisfied. 
%%We may assume $W=W_1\cup W_2$ and 
%$F$ is a non-empty, closed irreducible set in $W$. 
%If $F\cap W_1=(G\cap W_1) \cup (H \cap W_1)$, then  
%$F=(G' \cup H')\cap F$, where $G'=G\cup (W\setminus W_1)$, $H'=H\cup (W\setminus W_1)$. This means $F\subseteq G'$ or $F \subseteq H'$ and $F\cap W_1\subseteq G'\cap W_1=G \cap W_1$ or $F\cap W_1\subseteq H\cap W_1$. The above proves that
% $F\cap W_1$, $F\cap W_2$ are closed irreducible in $W_1$, $W_2$, respectively. 
%Since $W_1$, $W_2$ are sober,  we have 
%%$$F=(cl\{x_1\} \cap W_1) \cup (cl\{x_2\} \cap W_2), \quad x_1\in F\cap W_1, x_2\in F\cap W_2$$  
%%so $F\subseteq cl\{x_1\} \cup cl\{x_2\}$. By irreducibility of $F$, we have $F\subseteq cl\{x_1\}$ with some $x_1\in F$ 
%%or $F\subseteq cl\{x_2\}$ with some $x_2\in F$, so $F$ is the  closure of a singleton. This proves $W$ is sober. 
%%\end{proof}
 the set $W_{1}\cup ...\cup W_n$ is spectral and its compact open subset  $A$ belongs to $SO(X)$.
\end{proof}

\begin{proposition}
In a strongly locally spectral space $(X,\tau_X)$, we have
$$ CO(X)^o = ICO(X) \s \mc{P}(X).$$
\end{proposition}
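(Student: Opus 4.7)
The proof will be a straightforward set-equality argument in two directions, where the essential input from the strong local spectrality hypothesis is that $CO(X)$ actually covers $X$.

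First I would unfold the definitions into a comparable form. By the locally small space convention applied to $(X, CO(X))$, the family $CO(X)^o$ consists of all $M \subseteq X$ such that $M \cap V \in CO(X)$ for every $V \in CO(X)$. On the other hand, $Y \in ICO(X)$ means $Y$ is open and $V \cap Y$ is compact for every $V \in CO(X)$; since $V$ and $Y$ are both open, $V \cap Y$ is automatically open, so compactness of $V \cap Y$ is the same as $V \cap Y \in CO(X)$. Thus membership in $ICO(X)$ is equivalent to: $Y$ is open, and $Y \cap V \in CO(X)$ for all $V \in CO(X)$. Comparing with $CO(X)^o$, the two families differ only in whether we require $M$ itself to be open.

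The inclusion $ICO(X) \subseteq CO(X)^o$ is then immediate: by the reformulation above, the defining condition of $ICO(X)$ already implies the defining condition of $CO(X)^o$.

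For the reverse inclusion $CO(X)^o \subseteq ICO(X)$, let $M \in CO(X)^o$. I must verify that $M$ is open. Here I use strong local spectrality: by the previous proposition $CO(X) = SO(X)$, and since $SO(X)$ covers $X$, so does $CO(X)$. Therefore $M = \bigcup_{V \in CO(X)} (M \cap V)$, and by assumption each $M \cap V$ lies in $CO(X)$, hence is open. As a union of open sets, $M$ is open. Together with the compactness clause (every $M \cap V \in CO(X)$ is in particular compact), this gives $M \in ICO(X)$.

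The only nontrivial step is the deduction of openness of $M$, and this is precisely where the strong local spectrality hypothesis is consumed — without a covering by compact open (equivalently, spectral) sets, one could not recover $M$ as a union of sets of the form $M \cap V$ with $V \in CO(X)$. The inclusion $CO(X)^o \subseteq \mc{P}(X)$ asserted in the statement is of course tautological.
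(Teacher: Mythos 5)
Your proof is correct and follows essentially the same route as the paper's: the inclusion $ICO(X)\subseteq CO(X)^o$ is read off from the definitions, and the reverse inclusion uses the covering of $X$ by $CO(X)$ to write $M=\bigcup_{V\in CO(X)}(M\cap V)$ and deduce openness. Your explicit unpacking of why ``$V\cap Y$ compact'' is equivalent to ``$V\cap Y\in CO(X)$'' (openness of the intersection being automatic) is a useful clarification the paper leaves implicit, but it is the same argument.
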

\begin{proof}
If $V\in CO(X)^o$, then $V$ is a union of compact open sets since $CO(X)$ covers $X$.
Hence  $V\in \tau_X$ and $V$ satisfies the definition of a member of $ICO(X)$.

If $V\in ICO(X)$ and $A$ is any member of  $CO(X)$, then $V\cap A\in CO(X)$. This means $V\in CO(X)^o$.
\end{proof}

\begin{proposition}
A strongly locally spectral space on a set $X$ may be equivalently
defined by:
\begin{enumerate}
\item[a)] the topology $\tau_X$, 
\item[b)]  the family of compact open subsets $CO(X)$, 
\item[c)]  the family of spectral open subsets $SO(X)$, 
\item[d)]  the family of  intersection compact open subsets $ICO(X)$. 
\end{enumerate}
\end{proposition}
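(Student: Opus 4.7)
The plan is to show that any one of the families $\tau_X$, $CO(X)$, $SO(X)$, $ICO(X)$ on the set $X$ determines the strongly locally spectral structure; the remaining three are then recovered from their defining formulas. The two previous propositions supply the key inputs: $SO(X)=CO(X)$ and $ICO(X)=CO(X)^o$.

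One direction is immediate: given $\tau_X$, the three other families are defined directly in terms of $\tau_X$. For the converse, I would recover $\tau_X$ from each of $CO(X)$, $SO(X)$, $ICO(X)$. From $CO(X)$: strong local spectrality and semispectrality ensure that $CO(X)$ covers $X$ and is closed under finite intersections, so it is a basis of some topology $\tau$ on $X$; on the other hand, $CO(X)$ is already a basis of $\tau_X$, because each $x\in X$ lies in some spectral open $W$ whose compact open subsets form a basis of the subspace topology on $W$, and these sets belong to $CO(X)$. Hence $\tau=\tau_X$. From $SO(X)$: this reduces to the previous case via $SO(X)=CO(X)$. From $ICO(X)$: the chain of inclusions $CO(X)\subseteq ICO(X)\subseteq \tau_X$ (the first holding by semispectrality, since $A\in CO(X)$ implies $A\cap V\in CO(X)$ for every $V\in CO(X)$) shows that the topology generated by $ICO(X)$ both contains $\tau_X$ (it contains the basis $CO(X)$) and is contained in $\tau_X$ (its generators are $\tau_X$-open), hence equals $\tau_X$.

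The step that requires the most care is extracting $CO(X)$ from $ICO(X)$ alone. Having recovered $\tau_X$ as above, I would simply define $CO(X)=\{V\in ICO(X):V\text{ is compact in }\tau_X\}$; this is consistent with the inclusion $CO(X)\subseteq ICO(X)$ just noted and with the identity $CO(X)^o=ICO(X)$ from the preceding proposition. A purely combinatorial characterization of $CO(X)$ inside $ICO(X)$ that avoids passing through $\tau_X$ is more subtle and not needed for the stated equivalence, so my proposal is to route all four recoveries through $\tau_X$ and invoke the two previous propositions to close the loop.
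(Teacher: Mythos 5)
Your proposal is correct and in substance coincides with the paper's argument: both rest on the identities $SO(X)=CO(X)$ and $ICO(X)=CO(X)^o$ from the two preceding propositions, together with the observation that $CO(X)$ is a basis of $\tau_X$ and that $\tau_X$ consists of unions of subfamilies of $ICO(X)$. The paper phrases the recovery as a single cycle $\tau_X\to CO(X)\to ICO(X)\to \tau_X$, while you route everything through $\tau_X$ as a hub; this is only a presentational difference.
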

\begin{proof}
Elements of $CO(X)=SO(X)$ are the compact elements of $\tau_X$. Elements of $ICO(X)$ are the sets compatible with those of $CO(X)$. Elements of $\tau_X$ are the unions of subfamilies of $ICO(X)$. Each of the considered families induces all the other.
\end{proof}

\begin{definition}
A subset $X_d \s X$ in a strongly locally spectral space $(X,\tau_X)$  will be called \textit{decent} if any of the two equivalent conditions is satisfied:
\begin{enumerate}
\item[$(1)$]  for $A,B\in CO(X)$ if $A\neq B$, then $A\cap X_d \neq B\cap X_d$,
\item[$(2)$]    $R_d: CO(X) \ni A \mapsto A\cap X_d \in CO(X)_d$
 is an isomorphism of lattices.
\end{enumerate}
\end{definition}

\begin{definition}
The \textit{patch topology} of a strongly locally spectral space  $(X,\tau_X)$  is the topology on $X$  with  a basis 
$ CO(X) \setminus_1 CO(X)$.
\end{definition}

\begin{proposition} In a strongly locally spectral space  $(X,\tau_X)$ the decent subsets are exactly the patch dense subsets.
\end{proposition}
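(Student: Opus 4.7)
The plan is to replay the proof of Proposition \ref{patch-dense} essentially verbatim, after checking that the two ingredients it relied on are still available in the strongly locally spectral setting. Those ingredients are: (a) the patch topology has $CO(X)\setminus_1 CO(X)$ as a basis, which holds by the very definition just given; and (b) $CO(X)$ is closed under finite intersections (immediate from the semispectrality clause in the definition of strongly locally spectral) and under finite unions (automatic for compact open sets in any topological space). Note that we do \emph{not} need $X\in CO(X)$, which is where the argument would have to differ from the spectral case if it differed at all.

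For the direction ``decent $\Rightarrow$ patch dense'', I would take a non-empty patch open set $U$, pick a basic patch open subset $A\setminus B\subseteq U$ with $A,B\in CO(X)$, and rewrite it as $A\setminus(A\cap B)$. By semispectrality, $A\cap B\in CO(X)$, and since $A\setminus(A\cap B)\neq\vn$ we have $A\neq A\cap B$ in $CO(X)$. Decency then gives $A\cap X_d\neq (A\cap B)\cap X_d$, hence $(A\setminus B)\cap X_d\neq\vn$, so $U\cap X_d\neq\vn$.

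For the direction ``patch dense $\Rightarrow$ decent'', I would take $A,B\in CO(X)$ with $A\neq B$ and observe that $A\triangle B=(A\setminus B)\cup(B\setminus A)$ is a non-empty patch open set (both pieces are basic patch opens). Patch density of $X_d$ yields $X_d\cap(A\triangle B)\neq\vn$, which is incompatible with $A\cap X_d=B\cap X_d$; therefore $R_d(A)\neq R_d(B)$ and $X_d$ is decent.

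There is no real obstacle here: the argument is the same as in Proposition \ref{patch-dense}, and the only point to flag is that semispectrality is exactly what is needed to keep $A\cap B$ in $CO(X)$ when rewriting $A\setminus B$ as $A\setminus(A\cap B)$. I would state the proof briefly, possibly just by pointing to Proposition \ref{patch-dense} and noting that its proof used only these two closure properties of $CO(X)$ together with the common definitions of decency and patch topology.
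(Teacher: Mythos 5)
Your proposal is correct and matches the paper exactly: the paper's proof consists of the single sentence that it is the same as the proof of Proposition \ref{patch-dense}, which is precisely the verbatim replay you carry out, and your observation that semispectrality supplies the closure of $CO(X)$ under intersection is the right point to flag.
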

\begin{proof}
The same as the proof of Proposition \ref{patch-dense}.
\end{proof}

\begin{exam}
The spaces  $(\PF(\mc{L}_{lom}), \tau(\widetilde{\mc{L}_{lom}}))$ and
 $(\PF(\mc{L}_{lrom}), \tau(\widetilde{\mc{L}_{lrom}}))$ are strongly locally spectral  and are homeomorphic to  open  patch dense subspaces in the spectral spaces 
 $(\PF(\mc{L}^o_{lom}),\tau( \widetilde{\mc{L}^o_{lom}} ))$, $(\PF(\mc{L}^o_{lom}),\tau( \widetilde{\mc{L}^o_{lom}} ))$, respectively, that are   known from Example \ref{many}. 
\end{exam}

\begin{definition} \label{sls}
A mapping $g:(X,\tau_X) \to (Y,\tau_Y)$ between  strongly locally spectral spaces is \textit{spectral}  if the following conditions are satisfied:
 \begin{enumerate}
\item[$(1)$]  $g$ is \textit{bounded}: $g(CO(X))$ refines $CO(Y)$,
\item[$(2)$]  $g$ is \textit{s-continuous}: $g^{-1}(ICO(Y))\subseteq ICO(X)$. 
\end{enumerate}
\end{definition}

\begin{proposition} \label{morph}
If $g:(X,\tau_X) \to (Y\tau_Y)$ is a  mapping between strongly locally spectral spaces, then the following conditions are equivalent:
\begin{enumerate}
\item[$(1)$]  $g$ is spectral,
\item[$(2)$] $g$ is bounded and locally spectral 
(i.e.,   for any $A\in CO(X)$, $B\in CO(Y)$ such that $g(A)\subseteq B$,
the restriction  $g_A^B:A\to B$ is a spectral mapping between spectral spaces). 
\end{enumerate}
\end{proposition}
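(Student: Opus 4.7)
The plan is to prove both implications by unpacking the definitions, using the identities $CO(X)=SO(X)$ and $CO(X)^o=ICO(X)$ for strongly locally spectral spaces established in the preceding propositions, together with the elementary fact that if $A\in CO(X)$ then the compact open subsets of the subspace $A$ are exactly the members of $CO(X)$ contained in $A$ (and analogously for $B\in CO(Y)$).

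For $(1)\Rightarrow(2)$, assume $g$ is bounded and $s$-continuous. Fix $A\in CO(X)$, $B\in CO(Y)$ with $g(A)\s B$, and take $W\in CO(B)$. Since $B$ is itself compact open in $Y$, the set $W$ is already compact open in $Y$, hence $W\in CO(Y)\s ICO(Y)$. By $s$-continuity, $g^{-1}(W)\in ICO(X)=CO(X)^o$, so intersecting with $A\in CO(X)$ yields $g^{-1}(W)\cap A\in CO(X)$. This set lies inside $A$, hence belongs to $CO(A)$, and it equals $(g_A^B)^{-1}(W)$. Thus $g_A^B$ is a spectral mapping between spectral spaces.

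For $(2)\Rightarrow(1)$, assume $g$ is bounded and locally spectral, and take $V\in ICO(Y)$. To show $g^{-1}(V)\in CO(X)^o$, fix an arbitrary $A\in CO(X)$; boundedness supplies $B\in CO(Y)$ with $g(A)\s B$, so in particular $A\s g^{-1}(B)$. Since $V\in ICO(Y)$ and $B\in CO(Y)$, the set $V\cap B$ lies in $CO(Y)$ and, being contained in $B$, in $CO(B)$. By the assumed spectrality of $g_A^B$ we get $(g_A^B)^{-1}(V\cap B)\in CO(A)$. But this preimage equals $g^{-1}(V\cap B)\cap A=g^{-1}(V)\cap A$, the last equality using $A\s g^{-1}(B)$. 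Finally $CO(A)\s CO(X)$ since $A\in CO(X)$, so $g^{-1}(V)\cap A\in CO(X)$, which proves $g^{-1}(V)\in CO(X)^o=ICO(X)$.

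The only delicate point in the argument is the direction $(2)\Rightarrow(1)$: one must observe that boundedness of $g$ lets one choose, for every test set $A\in CO(X)$, a containing $B\in CO(Y)$, and that $V\in ICO(Y)$ then produces a compact open subset $V\cap B$ of $B$ to which local spectrality applies. Everything else is the routine identity $(g_A^B)^{-1}(V\cap B)=g^{-1}(V)\cap A$ together with the transitivity of ``compact open'' along the inclusion $A\hookrightarrow X$.
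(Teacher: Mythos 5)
Your proof is correct and follows essentially the same route as the paper's: both directions reduce to the identity $(g_A^B)^{-1}(W)=g^{-1}(W)\cap A$, using $CO(X)^o=ICO(X)$ and the semispectral condition to move between $CO$ and $ICO$. The only cosmetic difference is that you pass through $CO(B)\subseteq CO(Y)\subseteq ICO(Y)$ where the paper writes $CO(B)=ICO(B)\subseteq ICO(Y)$; the substance is identical.
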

\begin{proof} 
$(1)\implies (2)$ If $g$ is spectral and $A$, $B$ are as in the statement, then 
$$CO(A)=ICO(A),\quad  CO(B)=ICO(B) \subseteq ICO(Y).$$ 
Now $(g_A^B)^{-1} (CO(B))\subseteq ICO(X) \cap_1 A \subseteq CO(A)$, so $g$ is locally spectral.

\noindent $(2)\implies (1)$ If $g$ is bounded and locally spectral, then, for $D\in ICO(Y)$ and $A\in CO(X)$, we have 
$$g^{-1}(D) \cap A=(g_A^B)^{-1}(D\cap B) \in CO(X),$$
with some $B\in CO(Y)$  such that $g(A)\subseteq B$. This means $g$ is s-continuous.
\end{proof}

\begin{definition}
By \textbf{slSpec} we shall denote the category of  strongly locally spectral spaces and  spectral mappings between them.
\end{definition}

\begin{definition} \label{slss}
A mapping $g:(X,\tau_X)\to (Y,\tau_Y)$ between  strongly locally spectral spaces is \textit{strongly spectral}  if the following conditions are satisfied:
 \begin{enumerate}
\item[$(1)$]  $g$ is \textit{bounded},
\item[$(2)$]  $g$ is \textit{strongly continuous}: $g^{-1}(CO(Y))\subseteq CO(X)$. 
\end{enumerate}
\end{definition}

\begin{definition}
By $\mathbf{slSpec}^s$ we shall denote the category of  strongly locally spectral spaces and  strongly spectral mappings between them.
\end{definition}

\section{The Category $\mathbf{ZLat}$}

\begin{definition}
For a homomorphism of lattices $h:L\to M$, we say that $h$ is \textit{dominating} or
satisfies the \textit{condition of domination}, if:
$$ \forall a\in M \: \exists b\in L \quad a \vee h(b) = h(b). $$
\end{definition}

\begin{definition}
By $\mathbf{ZLat}$ we denote the category of distributive lattices with zeros and  dominating homomorphisms of lattices respecting zeros. 
\end{definition}

\begin{definition}
For    $L$  a distributive lattice with zero  and $a\in L$,  we set 
$$D^f(a)=\{  q\in \PF(L)\mid  a \in q \}, \qquad  D(a)=\{  p\in \PI(L)\mid a \notin p \}, $$
$$V^f(a)=\{  q\in \PF(L)\mid  a \notin q \}, \qquad  V(a)=\{  p\in \PI(L)\mid a \in p \}, $$
where $\PI(L)$ is the set of all prime ideals in $L$.
\end{definition}

Below, we restate  in a modern language  two theorems  of M. H. Stone published in 1938.

\begin{theorem}[{{\cite[Theorem 15]{S}}}]  \label{stone1}
Let $(L, \vee, \wedge, 0)$ be a distributive lattice with zero. Then the sets
$$ D(I)=\{ p \in \PI(L)\mid  I \not\subseteq p \}, \quad \mbox{where $I$ is an ideal in }L, $$
form a $T_0$  topology on $ \PI(L)$ with a basis
$$ \{  D(a)\mid  a\in L \} = CO(\PI(L))$$ 
closed under finite intersections and satisfying the condition \\
 $(\star)$  for a closed set $F$ and a subfamily $ \mathcal{C} \subseteq CO(\PI(L)) $
 centered on $F$ (this means: for any finite family $C_1,...,C_n$ of members of $\mathcal{C}$ the set $F\cap C_1 \cap ... \cap C_n$ is nonempty), the intersection
 $F\cap \bigcap \mathcal{C}$ is nonempty.
\end{theorem}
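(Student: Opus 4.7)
The plan is to verify the listed assertions in sequence, leaning on the distributive prime separation theorem as the single nontrivial ingredient.

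First I would show that $\{D(I) \mid I\text{ an ideal in }L\}$ is a topology. Since $D(\{0\})=\vn$ and $D(L)=\PI(L)$, only closures under union and finite intersection need work. For intersections, $D(I)\cap D(J)=D(I\cap J)$: if $a\in I\setminus p$ and $b\in J\setminus p$, then $a\wedge b\in I\cap J$ and, because $p$ is prime, $a\wedge b\notin p$; the reverse inclusion is immediate from $I\cap J\s I,J$. For arbitrary unions, $\bigcup_\alpha D(I_\alpha)=D\bigl(\bigvee_\alpha I_\alpha\bigr)$ where the ideal join is taken in the usual way: if $c\in\bigvee_\alpha I_\alpha$ with $c\notin p$, write $c\leq a_1\vee\cdots\vee a_n$ with $a_k\in I_{\alpha_k}$, and use that $p$ is an ideal closed under finite joins to conclude some $a_k\notin p$, hence $I_{\alpha_k}\not\s p$. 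Next, $\{D(a)\mid a\in L\}$ is a basis since $D(I)=\bigcup_{a\in I}D(a)$, and it is closed under finite intersections via $D(a)\cap D(b)=D(a\wedge b)$ (again by primality). The $T_0$ property is immediate: if $p\neq q$, pick $a\in p\triangle q$; then exactly one of $p,q$ lies in $D(a)$.

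Second I would identify $\{D(a)\}$ with $CO(\PI(L))$. Each $D(a)$ is compact: given a basic cover $D(a)\s\bigcup_\alpha D(a_\alpha)$, let $J$ be the ideal generated by $\{a_\alpha\}$; if $a\notin J$, the distributive lattice prime separation theorem (a Zorn-type argument using only the existence of $0$) supplies a prime ideal $p\supseteq J$ with $a\notin p$, contradicting $D(a)\s D(J)$. Hence $a\in J$, so $a\leq a_{\alpha_1}\vee\cdots\vee a_{\alpha_n}$ and the corresponding $D(a_{\alpha_k})$'s cover $D(a)$. Conversely, any compact open is a finite union of basic sets, and $D(a_1)\cup\cdots\cup D(a_n)=D(a_1\vee\cdots\vee a_n)$ is again basic, so $CO(\PI(L))=\{D(a)\mid a\in L\}$.

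Third I would verify condition $(\star)$, which is the main content and the step most prone to subtle error. Every closed set has the form $F=V(J)=\{p\mid J\s p\}$ for some ideal $J$. Let $\mc{C}=\{D(a_i)\mid i\in I\}\s CO(\PI(L))$ be centered on $F$, and let $F_0$ be the filter of $L$ generated by $\{a_i\mid i\in I\}$. The centeredness forces $J\cap F_0=\vn$: given $b\in J\cap F_0$, one has $b\geq a_{i_1}\wedge\cdots\wedge a_{i_n}$ for some indices, while the centered hypothesis yields a prime ideal $q\supseteq J$ with $q\in D(a_{i_1})\cap\cdots\cap D(a_{i_n})=D(a_{i_1}\wedge\cdots\wedge a_{i_n})$, i.e.\ $a_{i_1}\wedge\cdots\wedge a_{i_n}\notin q$; but $b\in J\s q$ and downward closure of $q$ would then yield $a_{i_1}\wedge\cdots\wedge a_{i_n}\in q$, a contradiction. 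Applying the distributive prime separation theorem to the disjoint ideal $J$ and filter $F_0$ produces a prime ideal $p$ with $p\supseteq J$ and $p\cap F_0=\vn$; that $p$ lies in $F\cap\bigcap\mc{C}$. The main obstacle is ensuring that the separation theorem is invoked in a form valid for lattices with zero but possibly without top, and that the empty-family edge case of $(\star)$ (which reduces to $F\neq\vn$, forced by centeredness with $n=0$) is handled correctly.
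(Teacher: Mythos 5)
Your proposal is correct and follows essentially the same route as the paper: the identities $D(\{0\})=\vn$, $D(I)\cap D(J)=D(I\cap J)$, $\bigcup_\alpha D(I_\alpha)=D(\bigvee_\alpha I_\alpha)$ and $D(a)\cap D(b)=D(a\wedge b)$ establish the topology, the $T_0$ property and the basis; compactness of the sets $D(a)$ is reduced to the implication $D(a)\s D(J)\Rightarrow a\in J$; and $(\star)$ is obtained by separating the ideal defining the closed set $F$ from the filter generated by the indexing elements via the prime ideal--filter separation theorem (for which the paper cites Stone's Theorem 6). The only difference is that you make the appeal to prime separation explicit also in the compactness step, where the paper leaves it implicit.
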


\begin{theorem}[{{\cite[Theorem 16]{S}}}] \label{stone2}
Let $(X, \tau_X)$ be a topological $T_0$-space where $CO(X)$ is a basis of the topology closed under finite intersections and satisfying the condition  $(\star)$ from the previous theorem.
%%for a closed set $F\subseteq X$ and a subfamily $\mathcal{C} \subseteq CO(X) $
%% centered on $F$ (this means: for any finite family $C_1,...,C_n$ of members of $\mathcal{C}$ the set $F\cap C_1 \cap ... \cap C_n$ is nonempty), the intersection
%% $F\cap \bigcap \mathcal{C}$ is nonempty.
Then: 
\begin{enumerate}
\item[$(1)$] $(CO(X), \cup, \cap, \emptyset )$  is a distributive lattice with zero,
\item[$(2)$] $\Psi: \mathcal{I}(CO(X)) \ni I \mapsto \bigcup I \in \tau_X$ is an isomorphism of lattices,
where $\mathcal{I}(CO(X))$ is the lattice of all ideals in $CO(X)$,
\item[$(3)$] for each $p \in \PI(CO(X))$ there exists a unique $x_p\in X$ such that 
$$\bigcup p = \ext \{ x_p \},$$ 
\item[$(4)$]  the mapping  $H:\PI(CO(X)) \ni p \mapsto x_p \in X$ is a homeomorphism, where the topology in $\PI(CO(X))$ is defined as in Theorem \ref{stone1}.    
\end{enumerate}
\end{theorem}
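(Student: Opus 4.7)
My plan is to prove the four items in order, using $(\star)$ only in the single hard place, which is (3). Throughout, I write $L = CO(X)$.

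\textbf{Parts (1) and (2).} Part (1) is immediate: $L$ is closed under $\cup$ (always), closed under $\cap$ by hypothesis, and contains $\emptyset$, and distributivity is inherited from $\mathcal{P}(X)$. For (2), I would define the candidate inverse $\Phi: \tau_X \to \mathcal{I}(L)$ by $\Phi(W) = \{A \in L : A \subseteq W\}$; this is downward closed and closed under finite joins (as $L$ is), hence an ideal. Clearly $\Psi(\Phi(W)) \subseteq W$, and the reverse inclusion holds because $L$ is a basis. For $\Phi(\Psi(I)) = I$, take $A \in L$ with $A \subseteq \bigcup I$; by compactness of $A$ and openness of the members of $I$, finitely many suffice, and closure of $I$ under finite unions plus downward closure force $A \in I$. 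Preservation of $\cup$ is trivial; preservation of $\cap$ reduces to checking $\bigcup(I_1 \cap I_2) = \bigcup I_1 \cap \bigcup I_2$, which uses that $L$ is closed under $\cap$ and that ideals are downward closed.

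\textbf{Part (3), the main step.} Given $p \in \PI(L)$, set $F = X \setminus \bigcup p$ (a closed subset) and $\mathcal{C} = L \setminus p$. Primeness of $p$ gives that $\mathcal{C}$ is closed under finite intersections in $L$ (the complement of a prime ideal is closed under $\cap$), so any finite intersection $D = C_1 \cap \cdots \cap C_n$ of elements of $\mathcal{C}$ lies in $\mathcal{C}$. To apply $(\star)$ I must verify that $\mathcal{C}$ is centered on $F$, i.e.\ $D \not\subseteq \bigcup p$. If it were, compactness of $D$ and openness of members of $p$ would give $D \subseteq C'_1 \cup \cdots \cup C'_m$ with $C'_i \in p$; since ideals are closed under finite joins and downward closure, this yields $D \in p$, contradicting $D \in \mathcal{C}$. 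So $(\star)$ supplies some $x_p \in F \cap \bigcap \mathcal{C}$. The defining properties are: $x_p \notin A$ for every $A \in p$ (from $x_p \in F$) and $x_p \in C$ for every $C \in L \setminus p$ (from $x_p \in \bigcap \mathcal{C}$). Using the basis $L$, this rewrites as $\{A \in L : x_p \in A\} = L \setminus p$, which immediately gives $\cl\{x_p\} = F$, hence $\bigcup p = \ext\{x_p\}$. Uniqueness of $x_p$ follows from the $T_0$ hypothesis.

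\textbf{Part (4).} With $H(p) := x_p$ from (3), $H$ is well-defined and injective (if $x_p = x_q$ then $\bigcup p = \bigcup q$, and $\Psi$ is injective by (2)). For surjectivity, given $x \in X$ set $p_x = \{A \in L : x \notin A\}$; this is downward closed, closed under finite joins, prime (if $x \notin A \cap B$ then $x \notin A$ or $x \notin B$), and proper because $L$ covers $X$. A direct check gives $\bigcup p_x = \ext\{x\}$, so $H(p_x) = x$. For bicontinuity, the same analysis yields $H^{-1}(a) = \{p : x_p \in a\} = \{p : a \notin p\} = D(a)$ for each $a \in L$, which is simultaneously a basic open set of $\PI(L)$ (by Theorem~\ref{stone1}) and gives $H(D(a)) = a$ by surjectivity. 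So $H$ carries basic opens to basic opens in both directions.

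The only genuinely subtle step is (3): one must pick the family $\mathcal{C} = L \setminus p$ and the closed set $F = X \setminus \bigcup p$ in exactly the right way so that primeness yields closure under finite intersections, compactness of members of $L$ together with ideal properties yields centeredness on $F$, and $(\star)$ then produces the generic point. Everything else is a routine unwinding of definitions, the lattice identity $\bigcup(I_1 \cap I_2) = \bigcup I_1 \cap \bigcup I_2$, and compactness arguments already used in (2).
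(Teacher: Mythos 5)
Your proposal is correct and follows essentially the same route as the paper's proof: part (3) uses exactly the paper's choice of closed set $F=X\setminus\bigcup p$ and the filter $L\setminus p$, verifies centeredness by the same compactness-of-$D$ argument, and invokes $(\star)$ to produce the generic point, while (2) and (4) use the same ideal $\{A: A\subseteq W\}$ (resp.\ $\{A: x\notin A\}$) and the identification $H^{-1}(a)=D(a)$. The only cosmetic difference is that you package the bijectivity of $\Psi$ as an explicit inverse $\Phi$ rather than checking injectivity and surjectivity separately.
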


The following proposition  gives an explanation to  Theorem \ref{stone2}.

\begin{proposition} \label{rem}
For a topological space $(X,\tau_X)$, the following conditions are equivalent:
\begin{enumerate}
\item[$(1)$] $(X,\tau_X)$ is  strongly locally spectal,
\item[$(2)$] $(X,\tau_X)$ satisfies the conditions in the assumption of Theorem $\ref{stone2}$.  
\end{enumerate}
\end{proposition}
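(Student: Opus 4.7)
The plan is to prove the two implications separately, using the identity $CO(X) = SO(X)$ just established and the compactness of the patch topology on a spectral space.

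For $(1) \Rightarrow (2)$, I would first note that the $T_0$ property follows locally: for distinct $x, y \in X$, a spectral neighborhood $V$ of $x$ either excludes $y$ or, being $T_0$ itself, separates the two. The family $CO(X)$ is a basis of $\tau_X$ by passing from a point $x$ in an open set $U$ to a spectral neighborhood $V$ of $x$ and then to a compact open $W$ of $V$ with $x \in W \s U \cap V$; since compactness is intrinsic and $V$ is open in $X$, we get $W \in CO(X)$. Closure of $CO(X)$ under finite intersections is exactly semispectrality. The substantive step is condition $(\star)$: given a closed $F \s X$ and a centered subfamily $\mathcal{C} \s CO(X)$, fix any $C_0 \in \mathcal{C}$; inside the spectral space $C_0$ the patch topology is compact, $F \cap C_0$ is patch-closed, and every $C \cap C_0$ with $C \in \mathcal{C}$ is patch-clopen, so the collection $\{F \cap C_0\} \cup \{C \cap C_0 : C \in \mathcal{C}\}$ has the finite intersection property and meets in a point of $F \cap \bigcap \mathcal{C}$.

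For $(2) \Rightarrow (1)$, semispectrality is immediate. For local spectrality it suffices, since $CO(X)$ is a basis, to show each $V \in CO(X)$ is a spectral subspace. The compact opens of $V$ coincide with those members of $CO(X)$ contained in $V$, which makes the axioms $(S1)$--$(S4)$ routine. For sobriety $(S5)$ of $V$, given an irreducible closed $F \s V$, I would consider $\mathcal{C}_F = \{C \in CO(V) : C \cap F \neq \vn\}$; irreducibility forces this family to be centered on $F$, because $F \cap C_1 \cap \cdots \cap C_n = \vn$ would give $F \s \bigcup_i (V \setminus C_i)$ and thus $F \s V \setminus C_i$ for some $i$, contradicting $C_i \in \mathcal{C}_F$. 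Condition $(\star)$, which is inherited by $V$ since closed sets of $V$ are traces of closed sets of $X$ and members of $\mathcal{C}_F$ already lie in $V$, produces an $x_0 \in F \cap \bigcap \mathcal{C}_F$; a basis argument (any $y \in F \setminus \overline{\{x_0\}}$ would lie in some $C \in CO(V)$ disjoint from $\overline{\{x_0\}}$, hence $C \in \mathcal{C}_F$ and $x_0 \in C$, a contradiction) then shows $F = \overline{\{x_0\}}$, with uniqueness by $T_0$.

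The chief obstacle is condition $(\star)$ in the forward direction: the key move is to collapse the global argument into a single $C_0 \in \mathcal{C}$, turning a finite-intersection statement into a question about the patch topology of a genuine spectral space. A secondary subtlety in the reverse direction is verifying that $(\star)$ descends to each compact open subspace $V$ so that the sobriety argument can be run inside~$V$.
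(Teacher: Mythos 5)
Your proof is correct. The forward direction is essentially the paper's own argument: both of you verify $(\star)$ by fixing a single $C_0\in\mathcal{C}$, passing to the compact patch topology of the spectral space $C_0$, and applying the finite intersection property to the patch-closed set $F\cap C_0$ together with the patch-clopen sets $C\cap C_0$. The reverse direction is where you diverge: the paper reduces sobriety to Theorem~\ref{stone2}(3), observing that a proper (meet-)irreducible open set $V$ yields a prime ideal $I(V)=\{A\in CO(X)\mid A\subseteq V\}$ whose union is $\ext\{x_V\}$ for a unique point, whereas you run a self-contained generic-point argument inside each compact open $V$, building the centered family $\mathcal{C}_F=\{C\in CO(V)\mid C\cap F\neq\varnothing\}$ from an irreducible closed $F$ and extracting $x_0$ with $F=\cl\{x_0\}$ directly from $(\star)$. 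The two routes rest on the same underlying mechanism (prime ideals versus their complementary centered families of basic opens), but the paper's version is shorter because it reuses Stone's theorem already proved, while yours has the merit of not presupposing Theorem~\ref{stone2} and of checking explicitly that $(\star)$ descends to the subspace $V$ --- a point the paper leaves implicit in the phrase ``as well as all members of $CO(X)$ are sober.'' Both arguments are sound; only minor degenerate cases (empty $\mathcal{C}$, empty $F$) are left tacit in your write-up, exactly as in the paper.
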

\begin{proof}
$(1) \implies (2)$  Since the other conditions are obvious, we  prove $(\star)$. One may assume $F\neq \emptyset$
and  $\emptyset \neq \mathcal{C} \subseteq CO(X) $ is centered on $F$. Choose $C\in \mc{C}$. Then $F\cap C$ is patch compact and  members of $\mc{C}\cap_1 C$ are patch closed in $C$. Since finite subfamilies of $\mc{C}\cap_1 C$ meet $F\cap C$, the set $F\cap \bigcap \mathcal{C}$   is nonempty. 
 
\noindent $(2)\implies (1)$ Assume $V$ is a proper irreducible open subset 
of $X$. Then $I(V)=\{ A\in CO(X)\mid  A\s V \}$ is a prime  ideal in $CO(X)$.
%% If $B,C\in  CO(X)$ are such that $B\cap C\in I(V)$, then $(V\cup B) \cap (V\cup C)=V$, so $B\s V$ or $C\s V$ by irreducibility of $V$. This means $I(V)$ is prime.
By  (3)  of Theorem \ref{stone2}, there exists a unique $x_V$ such that 
$\bigcup I(V)=V = \ext \{ x_V\}$.  Hence $X$ as well as all members of $CO(X)$ are sober.  The other conditions are obvious. 
\end{proof}

The next corollary is a restatement of Theorem \ref{stone1}.

\begin{corollary} \label{48}
If $(L, \vee, \wedge, 0)$ is a distributive lattice with zero, then both \\
 $(\PF(L), \tau(\{ D^f(a) \mid a\in L\} ))$ and $(\PI(L), \tau(\{ D(a) \mid a\in L\}))$  
 are strongly locally spectral topological spaces. 
\end{corollary}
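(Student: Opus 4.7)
The plan is to reduce both halves of the corollary to Stone's Theorem \ref{stone1} via Proposition \ref{rem}. Theorem \ref{stone1} asserts exactly that $\PI(L)$ equipped with the topology generated by $\{D(a)\mid a\in L\}$ is $T_0$, has $CO(\PI(L))=\{D(a)\mid a\in L\}$ as a basis closed under finite intersections, and satisfies the centering condition $(\star)$. These are precisely the hypotheses appearing in the assumption of Theorem \ref{stone2}, which Proposition \ref{rem} identifies as equivalent to being strongly locally spectral. Thus the claim for $\PI(L)$ is immediate.

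For $\PF(L)$ my approach is to produce a homeomorphism with $\PI(L)$ by complementation and thereby transport the result. I would define $\Phi:\PF(L)\to \PI(L)$ by $\Phi(F)=L\setminus F$ and check, via the standard computations using distributivity together with upward and downward closure, that the complement of a prime filter is a prime ideal and conversely, so that $\Phi$ is a bijection with inverse $I\mapsto L\setminus I$. One then observes that $\Phi(D^f(a))=D(a)$ for every $a\in L$, so $\Phi$ matches the designated bases on the two sides and is therefore a homeomorphism. Since being strongly locally spectral is a topological invariant, strong local spectrality of $\PF(L)$ follows from that of $\PI(L)$.

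The one point demanding a little care is that $L$ is assumed only to have a zero and need not have a top, so one must confirm that the complementation construction does not collapse: this follows from the convention that a prime filter is non-empty and proper, so $L\setminus F$ is a proper non-empty subset, with the dual observation for prime ideals. The genuinely conceptual ingredient is Proposition \ref{rem}, which does the work of translating the centered-family condition $(\star)$ into the strongly locally spectral framework; all remaining steps are straightforward bookkeeping around the complementation bijection.
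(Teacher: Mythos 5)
Your proof is correct and follows essentially the same route as the paper's one-line proof, which simply invokes Theorem \ref{stone1} together with Proposition \ref{rem}; the complementation bijection $F\mapsto L\setminus F$ that you use to transfer the result from $\PI(L)$ to $\PF(L)$ is exactly the standard prime-filter/prime-ideal duality the paper leaves implicit. No gaps.
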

\begin{proof} Follows from Theorem \ref{stone1} and Proposition \ref{rem}.
\end{proof}

\begin{theorem} \label{neq}
The categories $\mathbf{slSpec}^s$ and $\mathbf{ZLat}^{op}$ are equivalent. 
\end{theorem}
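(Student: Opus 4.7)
The plan is to mimic the proof of Theorem \ref{lsso} in this lighter setting where lattices only have zeros and spaces are strongly locally spectral. Define the \emph{spectrum functor} $\bar{S}:\mathbf{ZLat}^{op}\to \mathbf{slSpec}^s$ by $\bar{S}(L)=(\PF(L),\tau(\{D^f(a)\mid a\in L\}))$, which is strongly locally spectral by Corollary \ref{48}, and $\bar{S}(h^{op})=h^{\bullet}$, with $h^{\bullet}(G)=\{a\in L\mid h(a)\in G\}$. Define the \emph{co-compact functor} $\bar{C}:\mathbf{slSpec}^s\to \mathbf{ZLat}^{op}$ by $\bar{C}(X)=(CO(X),\cup,\cap,\emptyset)$ and $\bar{C}(g)=(\mc{L}g)^{op}$, where $(\mc{L}g)(W)=g^{-1}(W)$.

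The first task is to check that these are well-defined functors. For $\bar{S}$, routine arguments show $h^{\bullet}(G)$ is upward closed, meet-closed, prime, and proper in $L$; the delicate point is \emph{nonemptiness}, which is precisely where the condition of domination is needed: pick any $c\in G$ and, by domination, $b\in L$ with $c\le h(b)$, so $h(b)\in G$ and hence $b\in h^{\bullet}(G)$. Strong continuity of $h^{\bullet}$ follows from the identity $(h^{\bullet})^{-1}(D^f(a))=D^f(h(a))$, and boundedness of $h^{\bullet}$ is domination rewritten: from $a\le h(b)$ one obtains $h^{\bullet}(D^f(a))\subseteq D^f(b)$. For $\bar{C}$: strong continuity of $g$ makes $\mc{L}g$ land in $CO(X)$, preservation of finite unions, intersections and the zero is obvious, and boundedness of $g$ becomes the condition of domination for $\mc{L}g$ directly.

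For the natural isomorphisms, I would use $\kappa_L:L\to CO(\PF(L))$, $a\mapsto D^f(a)$, and $\theta_X:X\to \PF(CO(X))$, $x\mapsto \hat{x}:=\{A\in CO(X)\mid x\in A\}$. Then $\kappa_L$ is a zero-preserving lattice isomorphism: surjectivity uses $D^f(a_1)\cup\dots\cup D^f(a_n)=D^f(a_1\vee\dots\vee a_n)$, so that $CO(\PF(L))$ equals $\{D^f(a)\mid a\in L\}$, while injectivity is the usual separation of distinct elements of a distributive lattice by a prime filter. The map $\theta_X$ is continuous with $\theta_X^{-1}(D^f(A))=A$ and is injective since strongly locally spectral spaces are $T_0$ (being locally $T_0$). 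The main obstacle is the surjectivity of $\theta_X$: given $F\in \PF(CO(X))$, pick $A\in F$; then $F\cap CO(A)$ is a prime filter in the spectral space $A$ (using $CO(A)=\{B\in CO(X)\mid B\subseteq A\}$), so classical Stone duality for the spectral $A$ produces a unique $x\in A$ with $F\cap CO(A)=\hat{x}\cap CO(A)$, and then $F=\hat{x}$ follows from $A\in F$ together with the upward closure of $F$ in $CO(X)$. Naturality of $\kappa$ and $\theta$ reduces to direct verifications along the lines of Steps 4--6 of the proof of Theorem \ref{lsso}, concluding the equivalence.
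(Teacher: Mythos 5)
Your proposal is correct and follows the same overall architecture as the paper's proof: the same pair of functors (the paper calls them $\widehat{Sp}$ and $\widehat{Co}$) and the same unit/counit maps $a\mapsto D^f(a)$ and $x\mapsto\hat{x}$. Two points where you genuinely add or deviate are worth noting. First, you explicitly verify that $h^{\bullet}(G)$ is \emph{nonempty} (hence really a prime filter) using the condition of domination; the paper leaves this implicit, and it is precisely the place where domination is indispensable already for well-definedness of $h^{\bullet}$, not only for its boundedness. Second, for the surjectivity of $\theta_X:X\to\PF(CO(X))$ the paper simply invokes the dual of Theorem~\ref{stone2} (Stone's 1938 representation for distributive lattices with zero), whereas you localize: pick $A\in F$, observe $F\cap CO(A)$ is a prime filter of the bounded lattice $CO(A)$ of the spectral space $A$, apply classical bounded Stone duality there, and recover $F=\hat{x}$ from $A\in F$ together with upward closure \emph{and} meet-closure of $F$ (the latter is needed to see $B\cap A\in F\cap CO(A)$ for $B\in F$, so state it). Your route buys independence from Theorem~\ref{stone2} at the cost of leaning on the classical duality for each compact open piece; both are legitimate, and the rest of your verifications (strong continuity via $(h^{\bullet})^{-1}(D^f(a))=D^f(h(a))$, boundedness versus domination in both directions, injectivity of $\kappa_L$ by prime-filter separation) match the paper's.
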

\begin{proof}
\textbf{Step 1:} Defining functor $\widehat{Co}:\mathbf{slSpec}^s \to \mathbf{ZLat}^{op}$.

For an object  $(X, \tau_X)$ of $\mathbf{slSpec}^s$, we set 
$\widehat{Co}(X, \tau_X)  =(CO(X), \cup, \cap, \emptyset)$. 
For a morphism $g:(X,\tau_X)\to (Y,\tau_Y)$ of  $\mathbf{slSpec}^s$, we set $\widehat{Co}(g)=(\mc{L} g)^{op}: CO(X)\to CO(Y)$, where $(\mc{L}g)(W)=g^{-1}(W)$ for $W\in CO(Y)$  defines a morphism $\mc{L}g$ of $\mathbf{ZLat}$.
Hence  $\widehat{Co}$ is a well defined functor.
%%The mapping $\mc{L} g: CO(Y) \to CO(X)$, where $(\mc{L}g)(W)=g^{-1}(W)$, 
%%is a homomorphism of lattices with zeros, satisfying the condition of  domination 
%%$$  \forall A\in CO(X) \: \exists B\in CO(Y) \quad A \s (\mc{L}g)(B), $$ 
%%by the boundedness of $g$.

\noindent \textbf{Step 2:} Defining functor $\widehat{Sp}:   \mathbf{ZLat}^{op} \to \mathbf{slSpec}^s$.

For an object $L=(L, \vee, \wedge, 0)$ of $\mathbf{ZLat}$, we put  
$\widehat{Sp}(L)=(\PF(L),\tau(\tilde{L}))$, where $\tilde{L}=D^f(L)=\{ D^f(a)\mid  a\in L\}$, 
which is a strongly locally spectral space by Corollary \ref{48} (or Theorem~\ref{stone1}).
For a morphism $h:L\to M$ of $\mathbf{ZLat}$, 
we set $\widehat{Sp}(h^{op})=h^{\bullet}$, which is a  strongly spectral mapping. \\
Boundedness of $h^{\bullet}$: since $h^{\bullet} (\widetilde{h(L)})$ refines $\widetilde{L}$ and $\widetilde{h(L)}=\{D^f(h(a))\mid a\in L\}$ dominates in $\widetilde{M}$, hence $h^{\bullet} (\widetilde{M})$ refines $\widetilde{L}$. \\
Strong continuity of $h^{\bullet}$: for any $\widetilde{a}\in CO(\PF(L))=\{ D^f(a)\mid  a\in L\}$ (see  Theorem \ref{stone1}), we have  $(h^{\bullet})^{-1} (\widetilde{a})= \widetilde{h(a)} \in CO(\PF(M))$. \\
Hence $\widehat{Sp}$ is a well defined functor.

\noindent \textbf{Step 3:}  The functor  $\widehat{Co} \widehat{Sp}$   is naturally isomorphic to $Id_{\mathbf{ZLat}^{op}}$.

Define a natural transformation $\alpha$ from $Id_{\mathbf{ZLat}^{op}}$ to $\widehat{Co} \widehat{Sp}$ by
 $\alpha_M(a)=\tilde{a}=D^f(a) \in \widetilde{M}$
for any object $M$ of $\mathbf{ZLat}$. Then each $\alpha_M:M \to \widetilde{M}$  is an  isomorphism of $\mathbf{ZLat}$
(injectivity follows from \cite[Theorem 6]{S}).
For a morphism $h:L\to M$ in $\mathbf{ZLat}$, one has 
$(\alpha_M \circ h) (a)= \widetilde{h(a)}= ( h^{\bullet})^{-1} (\widetilde{a})
= (\mc{L}h^{\bullet} \circ \alpha_L)(a)$, so $\alpha_L \circ h^{op}= (\mc{L} h^{\bullet})^{op}\circ \alpha_M$.
 Hence $\alpha$ is a natural isomorphism.

\noindent \textbf{Step 4:}  The functor  $\widehat{Sp} \widehat{Co}$   is naturally isomorphic to $Id_{\mathbf{slSpec}^s}$.

Define a natural transformation $\beta$ from $Id_{\mathbf{slSpec}^s}$ to $\widehat{Sp} \widehat{Co}$  by $\beta_X(x)=\hat{x}$  for any object $(X,\tau_X)$ of $\mathbf{slSpec}^s$, where $\hat{x}=\{ V\in CO(X)\mid x\in V \}$.
(We have $\widehat{X}=\{ \hat{x}\mid x\in X\} =\PF(CO(X))$ by the dual of Theorem \ref{stone2}).
For a morphism $g:(X,\tau_X)\to (Y,\tau_Y)$ of $\mathbf{slSpec}^s$, we have 
$(\beta_Y \circ g)(x)=\widehat{g(x)}= (\mc{L}g)^{\bullet} (\widehat{x})=( (\mc{L}g)^{\bullet} \circ \beta_X)(x)$ for $x\in X$.
Now $\beta_X$  is an isomorphism, since $\beta_X(CO(X))=CO(\hat{X})$, where
 $CO(\hat{X})=\{ \tilde{A}\mid A\in CO(X)\}$ and $\tilde{A}=\{\hat{x}\mid x\in A\}$. Hence $\beta$ is a natural isomorphism.
\end{proof}

\section{Stone Duality for $\mathbf{LSS}^s_0$}

\begin{definition} \label{altspec}
The category $\altspec$ has as objects pairs $((X,\tau_X), X_d)$ where $(X,\tau_X)$ is a
strongly locally spectral space and $X_d$ is a distinguished decent subset of $X$
and as morphisms strongly spectral mappings respecting the decent subsets.
\end{definition}

\begin{definition} \label{altlatz}
The category $\altlatz$ has pairs $(L, \mathbf{D}_L)$ where $L$ is a distributive lattice with zero and $\mathbf{D}_L$ is a distinguished decent set  
of prime filters in $\PF(L)$ as objects   and 
 homomorphisms of lattices with zeros respecting the decent sets of prime filters and satisfying the condition of domination as morphisms.
\end{definition}

\begin{definition}
The category $\mathbf{LSS}^s_0$ is a subcategory of $\mathbf{LSS}_0$ with the same objects and
{bounded strongly continuous} mappings as morphisms.
\end{definition}

\begin{exam} \label{strongly}
Let $\pi: \mb{R}_{lom} \sqcup \mb{R}_{lom}\to \mb{R}_{lom}$ be the natural projection
from the disjoint union of two copies of the real locally o-minimal line to the real locally o-minimal line.
This finite covering mapping is an example of a bounded strongly continuous mapping but not an isomorphism of  $\mathbf{LSS}^s_0$.
\end{exam}

\begin{theorem}
The categories $\mathbf{LSS}^s_0$, $\altlatz^{op}$ and $\altspec$ are equivalent.
\end{theorem}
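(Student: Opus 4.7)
The plan is to mimic the proof of Theorem \ref{lsso}, with two systematic simplifications and one key replacement. Since morphisms in $\mathbf{LSS}^s_0$ are \emph{strongly} continuous, preimages of smops are smops, so a morphism $f:(X,\mc{L}_X)\to(Y,\mc{L}_Y)$ induces a homomorphism of lattices with zero $\mc{L}f:\mc{L}_Y\to\mc{L}_X$, $\mc{L}f(B)=f^{-1}(B)$, directly, and we never need to pass to $\mc{L}_X^o$ or to a bornology. Dually, on the spectral side we use only $CO(X)$ of a strongly locally spectral space, without any bornology $CO_s(X)$. The crucial algebraic bridge is that boundedness of $f$ is precisely the \emph{condition of domination} on $\mc{L}f$: $A\s f^{-1}(B)$ iff $A \vee \mc{L}f(B)=\mc{L}f(B)$. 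In place of the classical Stone duality underlying Theorem \ref{lsso}, we invoke the equivalence $\mathbf{slSpec}^s\simeq\mathbf{ZLat}^{op}$ from Theorem \ref{neq}.

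Accordingly I would first define three functors in analogy with $\bar R,\bar S,\bar A$ of Theorem \ref{lsso}. The \emph{restriction} functor $\bar R:\altspec\to\mathbf{LSS}^s_0$ sends $((X,\tau_X),X_d)$ to $(X_d,CO(X)_d)$, where $CO(X)_d=CO(X)\cap_1 X_d$ is a lattice with zero covering $X_d$ and separating its points (by decency of $X_d$ together with $T_0$ of the strongly locally spectral space $X$), and sends a morphism $g$ to its restriction $g_d$; strong continuity of $g_d$ is immediate from Definition \ref{slss}(2) and boundedness from Definition \ref{slss}(1). The \emph{spectrum} functor $\bar S:\altlatz^{op}\to\altspec$ sends $(L,\mathbf{D}_L)$ to $((\PF(L),\tau(\widetilde L)),\mathbf{D}_L)$, strongly locally spectral by Corollary \ref{48}, in which $\mathbf{D}_L$ is decent because $CO(\PF(L))=\widetilde L$; it sends $h^{op}$ to $h^{\bullet}$, strongly spectral by Step 2 of Theorem \ref{neq} and respecting decent subsets by hypothesis. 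The \emph{algebraization} functor $\bar A:\mathbf{LSS}^s_0\to\altlatz^{op}$ sends $(X,\mc{L}_X)$ to $(\mc{L}_X,\hat X)$ with $\hat x=\{A\in\mc{L}_X\mid x\in A\}$ (a decent set of prime filters by the $T_0$ axiom), and $f$ to $(\mc{L}f)^{op}$: the domination condition is boundedness of $f$, while the identity $(\mc{L}f)^{\bullet}(\hat x)=\widehat{f(x)}$ gives the required respect of decent sets.

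Finally I would construct three natural isomorphisms: $\eta:\bar R\bar S\bar A\to Id_{\mathbf{LSS}^s_0}$ by $\eta_X(\hat x)=x$ (a bijection by $T_0$), $\theta:\bar S\bar A\bar R\to Id_{\altspec}$ by $\theta_X(\hat x^d)=x$, and $\kappa^{op}:\bar A\bar R\bar S\to Id_{\altlatz^{op}}$ induced by $\kappa_L(a)=\tilde a^d$ (a bijection by decency of $\mathbf{D}_L$). The verifications follow Steps 4--6 of Theorem \ref{lsso} almost verbatim, so I expect the sole point requiring genuine care to be $\theta$: one must check that $\theta_X$, originally defined on $\widehat{X_d}^d$, extends to a homeomorphism between the strongly locally spectral spaces $\PF(CO(X)_d)$ and $X$. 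This is handled by combining the lattice isomorphism $CO(X)_d\cong CO(X)$ from decency of $X_d$ with the dual of Theorem \ref{stone2}, which identifies $\PF(CO(X))$ with $X$ for a strongly locally spectral space. Once this identification is in place, preservation of $CO$ and of decent subsets is automatic, and naturality reduces to the direct computation $(\mc{L}g_d)^{\bullet}(\hat x^d)=\widehat{g(x)}^d$, paralleling the corresponding step for $\theta$ in Theorem \ref{lsso}; the remaining categorical yoga is then identical.
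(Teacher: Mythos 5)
Your proposal is correct and follows essentially the same route as the paper: the author likewise obtains this theorem by rerunning the six-step proof of Theorem \ref{lsso} with Theorem \ref{neq} (and Corollary \ref{48}) replacing classical Stone Duality, dropping $\mc{L}_X^o$ and the bornologies, letting $CO(X)_d$ and the whole lattice $L$ play the roles of $CO_s(X)_d$ and $L_s$, and identifying boundedness of $f$ with the domination condition on $\mc{L}f$. No substantive differences to report.
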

\begin{proof}
Similar to  the proof of Theorem \ref{lsso}, using Theorem \ref{neq} instead of the classical Stone Duality, with no necessity to mention explicitely the ambient bounded lattice  $\mc{L}_X^o$ of $\mc{L}_X$, with an object $L$ of  $\mathbf{ZLat}$  playing the role of $L_s$ and $CO(X)_d$ playing the role of $CO_s(X)_d$ in Theorem \ref{lsso}, restricting to the appropriate classes of morphisms.
\end{proof}

\begin{exam}
The mapping $(\mc{L} \pi)^{\bullet}\! : \!\mc{PF}(\mc{L}_{lom} \oplus \mc{L}_{lom})\!\to \mc{PF}(\mc{L}_{lom})$, where 
 $\pi: \mb{R}_{lom} \sqcup \mb{R}_{lom}\to \mb{R}_{lom}$ 
is as in Example \ref{strongly}, is the natural projection from the disjoint union of two copies of 
$\mc{PF}(\mc{L}_{lom})$ to $\mc{PF}(\mc{L}_{lom})$. It is a strongly spectral mapping between strongly locally spectral spaces. Moreover, 
 $\hat{\mb{R}}(\mc{L}_{lom})$ is a patch dense set in  $\mc{PF}(\mc{L}_{lom})$
 and 
  $\widehat{\mb{R}\sqcup \mb{R}}(\mc{L}_{lom}\oplus \mc{L}_{lom})$ is a patch dense set in  $\mc{PF}(\mc{L}_{lom}\oplus \mc{L}_{lom} )$.  The morphism $(\mc{L} \pi)^{\bullet}$  of  $\altspec$ corresponds to the morphism $\pi$ of $\mathbf{LSS}^s_0$
and may be understood as an extention of $\pi$.
\end{exam}

\end{document}